\documentclass[11pt]{article}
\usepackage{amsmath,amsthm,amsfonts,amscd,amssymb,eucal,latexsym,mathrsfs, appendix, yhmath, fancyhdr, setspace,url}
\usepackage[numbers,sort&compress]{natbib}
\usepackage[all,cmtip]{xy}
\usepackage{abstract}
\setlength{\textwidth}{15cm}
\setlength{\oddsidemargin}{4mm}
\setlength{\evensidemargin}{4mm}

\newtheorem{theorem}{Theorem}[section]
\newtheorem{corollary}[theorem]{Corollary}
\newtheorem{lemma}[theorem]{Lemma}
\newtheorem{proposition}[theorem]{Proposition}

\theoremstyle{definition}
\newtheorem{definition}[theorem]{Definition}

\allowdisplaybreaks

\begin{document}

\title{Remarks on strong embeddability\\for discrete metric spaces and groups\footnote{The authors are supported by NSFC (No. 11231002).}}

\author{Guoqiang Li and  Xianjin Wang}

\date{}

\maketitle

\begin{abstract}
In this paper, we show that the strong embeddability has fibering permanence property and is preserved under the direct limit for the metric space. Moreover, we show the following result: let $G$ is a finitely generated group with a coarse quasi-action on a metric space $X$. If $X$ has finite asymptotic dimension and the quasi-stabilizers are strongly embeddable, then $G$ is also strongly embeddable.

\end{abstract}

\maketitle \numberwithin{equation}{section}
%%%%%%%%%%%%%%%%%%%%%%%%%%%%%%%%%%%%%%%%%%%%%%%%%%%%%%%%%%%%%%%%%%%%%%%%%%%%%%%%%%%%%%%%%%%%%%%%%%%%%%%
\section{Introduction}
In \cite{Gromov21}, Gromov introduced the notion of coarse embeddability of metric spaces and suggested that a discrete finitely generated group that coarsely embeds into a Hilbert space, when equipped with a word length metric, would satisfy the Novikov conjecture \cite{Ferry21,Gromov21}. Subsequently, in \cite{Yu21}, Yu proved the coarse Baum-Connes conjecture holds for bounded geometry discrete metric spaces which are coarsely embeddable in Hilbert space.  In the same paper, Yu introduced a weak form of amenability that he called property A, which ensures the existence of a coarse embedding into Hilbert space. In later years, property A and coarse embeddability have been further studied \cite{Bell21,Guentner21,Higson21,Nowak21,Tu21}. In \cite{Guentner21}, the author showed that property A is preserved under group extensions. Unlike property A, coarse embeddability is not closed under group extensions \cite{Arzhantseva21}. In \cite{Ji21}, Ji, Ogle and W. Ramsey introduced the notion of strong embeddability which is stable under arbitary extensions.

Strong embeddability is an intermediate notion of strong coarse embeddability implied by Property A and implying coarse embeddability. In \cite{Xia22}, J. Xia and X. Wang studied the permanence properties of strong embeddability. Moreover, they proved that a metric space is strongly embeddable if and only if it has weak finite decomposition complexity with respect to strong embeddability. In \cite{Xia21}, J. Xia and X. Wang showed that a finitely generated group acting on a finitely asymptotic dimension metric space by isometries whose {\it$k$-stabilizers} are strongly embeddable is strongly embeddable. Now we extend this result to conclude as follows.

\begin{theorem}
Assume that $G$ is a finitely generated group with a coarse quasi-action on a metric space $X$. If $X$ has finite asymptotic dimension and there exists a base point such that its all quasi-stabilizers are strongly embeddable, then $G$ is strongly embeddable.
\end{theorem}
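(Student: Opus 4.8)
The plan is to realize the Cayley graph of $G$ as coarsely fibered over $X$ via the orbit map of the quasi-action at the given base point, and then to feed this into the fibering permanence property of strong embeddability established earlier in the paper (whose base-space hypothesis, finite asymptotic dimension, is exactly what we are given for $X$).

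First I would fix the base point $x_{0}$ together with representatives $\{\phi_{g}\}_{g\in G}$ of the coarse quasi-action, so that each $\phi_{g}\colon X\to X$ is an $(L,C)$-quasi-isometry with $L,C$ independent of $g$, and there is $D\ge 0$ with $d(\phi_{g}\phi_{h}(x),\phi_{gh}(x))\le D$ and $d(\phi_{e}(x),x)\le D$ for all $g,h\in G$ and $x\in X$. Two elementary consequences I would record: $\phi_{g^{-1}}$ is a coarse inverse of $\phi_{g}$ with uniform error, i.e. $d(\phi_{g^{-1}}\phi_{g}(x),x)\le 3D$; and the orbit map $\pi\colon G\to X$, $\pi(g)=\phi_{g}(x_{0})$ (with $G$ carrying a word metric) is bornologous. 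For the latter, if $d_{G}(g,h)=|g^{-1}h|\le n$ and $s=g^{-1}h$, an induction on word length using the defect $D$ and $\max\{d(\phi_{a}(x_{0}),x_{0}):a\text{ a generator}\}$ bounds $d(\phi_{s}(x_{0}),x_{0})$ by a function of $n$, and then $d(\pi(g),\pi(h))=d(\phi_{gs}(x_{0}),\phi_{g}(x_{0}))\le D+L\,d(\phi_{s}(x_{0}),x_{0})+C$.

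Next I would identify the fibers of $\pi$ with subspaces of the quasi-stabilizers of $x_{0}$. Writing $S_{R}=\{k\in G: d(\phi_{k}(x_{0}),x_{0})\le R\}=\pi^{-1}(B_{R}(x_{0}))$ for the $R$-quasi-stabilizer, I claim there is $R'=R'(R)$, depending only on $R$ and the structure constants $L,C,D$, such that for every $y\in X$ the preimage $\pi^{-1}(B_{R}(y))$ embeds isometrically into $S_{R'}$. If this preimage is empty there is nothing to prove; otherwise pick $g_{0}$ in it, so that for any other $g$ in it $d(\phi_{g}(x_{0}),\phi_{g_{0}}(x_{0}))\le 2R$. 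Applying the $(L,C)$-quasi-isometry $\phi_{g_{0}^{-1}}$ and then using the defect bound together with the coarse-inverse estimate gives $d(\phi_{g_{0}^{-1}g}(x_{0}),x_{0})\le 2LR+C+3D=:R'$, i.e. $g_{0}^{-1}g\in S_{R'}$. Hence $\pi^{-1}(B_{R}(y))\subseteq g_{0}S_{R'}$, and since left translation by $g_{0}$ is an isometry of the word metric, $g_{0}S_{R'}$ is an isometric copy of $S_{R'}$, which contains $\pi^{-1}(B_{R}(y))$ as a subspace.

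Finally, by hypothesis each $S_{R'}$ is strongly embeddable; as strong embeddability passes to subspaces and is a coarse invariant, the family $\{\pi^{-1}(B):B\subseteq X\text{ bounded}\}$ is uniformly strongly embeddable — every member embeds isometrically into the single strongly embeddable space $S_{R'}$ with $R'$ depending only on $\operatorname{diam}B$, so composing with one fixed strong embedding of $S_{R'}$ gives the required uniformity. Since $\operatorname{asdim}X<\infty$, applying the fibering permanence property of strong embeddability to the bornologous map $\pi\colon G\to X$ yields that $G$ is strongly embeddable. I expect the main obstacle to be the fiber analysis of the previous paragraph: one must fix the precise conventions for ``coarse quasi-action'' and ``quasi-stabilizer'', verify carefully that $\phi_{g^{-1}}$ is a \emph{uniform} coarse inverse of $\phi_{g}$, track every additive and multiplicative constant so that $R'$ genuinely does not depend on $y$, and confirm that the notion of ``uniformly strongly embeddable family'' demanded by the fibering theorem is precisely what ``isometrically embeds into one fixed strongly embeddable space'' supplies. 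A secondary point is checking that no properness assumption on the quasi-action is needed for $\pi$ to be a legitimate coarse map and for the preimages of bounded sets to be the correct fibers.
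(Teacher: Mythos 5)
Your proposal is correct and follows essentially the same route as the paper: the orbit map $\pi(g)=f_g(x_0)$ is shown to be bornologous, preimages of uniformly bounded sets are left-translated into a single quasi-stabilizer $W_{R'}(x_0)$ using the uniform control function and the defect constants (so they form an equi-strongly embeddable family), and the conclusion follows from the partition-of-unity criterion via finite asymptotic dimension of $X$. The only cosmetic differences are that you invoke the fibering permanence proposition directly instead of repeating its partition-of-unity construction, and you take the translating element from the fiber itself, which neatly sidesteps the paper's ``without loss of generality the action is transitive'' reduction.
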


The {\it coarse quasi-action} (see Definition \ref{25}) is designed to describe situations where elements of a group act on a metric space via {\it coarse equivalence}. And it is a significant and useful generalization of actions by isometries and quasi-isometries.

%%%%%%%%%%%%%%%%%%%%%%%%%%%%%%%%%%%%%%%%%%%%%%%%%%%%%%%%%%%%%%%%%%%%%%%%%%%%%%%%%%%%%%%%%%%%%%%%%%%%%%%
\section{Preliminaries}
A discrete metric space $X$ has bounded geometry if for all $R>0$ there exists $N_R$ such that $|B(x,R)|\leq N_R$ for all $x\in X$, where $|\cdot|$ denotes the number of elements of the ball $B(x,R)$. $X$ is called uniformly discrete if there exists a constant $C>0$ such that for any two distinct points $x,y\in X$ we have $d(x,y)\geq C$.
Assume that all metric spaces in this paper are uniformly discrete with bounded geometry. This class includes many interesting examples, in particular, all countable and uniformly discrete groups. Let $B$ be a Banach space and $B_1 =\{\eta \in B|~\|\eta\|=1\}$. For any $R,\,\epsilon >0$, a map $x\mapsto \xi_x$ from $X$ to $B$ will be said to have $(R,\,\epsilon)$ variation if $d(x,y)\leq R$ implies $\|\xi_x - \xi_y\|\leq\epsilon$.

\begin{definition}[see \cite{Ji21}]
Let $X$ be a metric space. Then $X$ is strongly embeddable if and only if for every $R,\,\epsilon>0$ there exists a Hilbert space valued map $\beta :X\rightarrow(l^2 (X))_1$ satisfying:

\vspace{3mm}(1) $\beta$ has $(R,\,\epsilon)$ variation;

\vspace{3mm}(2) $\lim\limits_{S\rightarrow \infty}\sup\limits_{x\in X}\sum\limits_{w\notin B(x,S)}\mid\beta_x (w)\mid^2=0$.~\label{def21}
\end{definition}

We will need to make use of a family of strongly embeddable metric spaces, with some uniform control.

\begin{definition}[see \cite{Ji21}]
A family $(X_i)_{i\in I}$ of metric spaces is equi-strongly embeddable if for every $R,~\epsilon>0$ there exists a family of Hilbert space valued maps $\xi^i:X_i \rightarrow(l^2 (X_i))_1$ satisfying:

\vspace{3mm}(1) for each $i\in I$, $\xi^i$ has $(R,\,\epsilon)$ variation;

\vspace{3mm}(2) $\lim\limits_{S\rightarrow \infty}\sup\limits_{i\in I}\sup\limits_{x\in X_i}\sum\limits_{w\notin B(x,S)}\mid\xi_{x}^i (w)\mid^2=0$.~\label{def22}
\end{definition}

Let $X$ be a set. A partition of unity on $X$ is a collection of functions $\{\phi_i\}_{i\in I}$, with $\phi_i :X\rightarrow [0,1]$, and such that $\sum_{i\in I}\phi_i (x)=1$ for every $x\in X$. A partition of unity $\{\phi_i\}_{i\in I}$ is said to subordinate to a cover $\mathcal{U}=\{U_i\}_{i\in I}$ of $X$ if each $\phi_i$ vanishes outside $U_i$.

The following conclusion will be useful to prove the main theorem.
\begin{theorem}[see \cite{Xia21}]
Say $X$ is a metric space such that for any $R,\,\epsilon>0$ there exists a partition of unity $\{\phi_i\}_{i\in I}$ on $X$ satisfying:

\vspace{3mm}(1) for all $x,y\in X$, if $d(x,y)\leq R$, then $\sum\limits_{i\in I}|\phi_{i}(x)-\phi_{i}(y)|\leq\epsilon$;

\vspace{3mm}(2) $\{\phi_i\}_{i\in I}$ is subordinated to an equi-strongly embeddable cover $\mathcal{U}=\{U_i\}_{i\in I}$ of $X$.

\vspace{2mm}Then $X$ is strongly embeddable.~\label{thm21}
\end{theorem}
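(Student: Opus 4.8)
The plan is to fix $R,\epsilon>0$ and build the required map $\beta\colon X\to (l^2(X))_1$ of Definition \ref{def21} by gluing the local maps furnished by equi-strong embeddability against the partition of unity, using the variation estimate (1) to control the gluing. First I would set $\epsilon_1=\epsilon^2/4$ and $\epsilon_2=\epsilon/2$. Applying the hypothesis with parameters $(R,\epsilon_1)$ produces a partition of unity $\{\phi_i\}_{i\in I}$ subordinate to an equi-strongly embeddable cover $\{U_i\}_{i\in I}$ and satisfying (1); applying Definition \ref{def22} to this cover with parameters $(R,\epsilon_2)$ produces maps $\xi^i\colon U_i\to (l^2(U_i))_1$ with uniform $(R,\epsilon_2)$ variation and uniform tail decay, each $U_i$ carrying the metric induced from $X$. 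Since $\phi_i$ vanishes off $U_i$, the vector $\sqrt{\phi_i(x)}\,\xi^i_x$ is well defined for every $i$ and $x$ (read as $0$ when $\phi_i(x)=0$), so I can form $v_x=\bigoplus_{i}\sqrt{\phi_i(x)}\,\xi^i_x$ in the orthogonal direct sum $\bigoplus_i l^2(U_i)$. As $\|\xi^i_x\|=1$ whenever $\phi_i(x)>0$, this gives $\|v_x\|^2=\sum_i\phi_i(x)\|\xi^i_x\|^2=\sum_i\phi_i(x)=1$.

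The main obstacle is that $v_x$ lives in $\bigoplus_i l^2(U_i)=l^2(\Omega)$ with $\Omega=\bigsqcup_i U_i$, whereas Definition \ref{def21} demands an $l^2(X)$-valued map whose tail condition (2) is measured by the metric of $X$; the multiplicities of the cover must be reconciled with the single target $l^2(X)$. I resolve this with the projection $\pi\colon\Omega\to X$, $(i,w)\mapsto w$, and the fiberwise-norm map $F\colon l^2(\Omega)\to l^2(X)$ defined by
\[
F(v)(w)=\Big(\sum_{i:\,w\in U_i}|v(i,w)|^2\Big)^{1/2}.
\]
This $F$ is norm-preserving, since $\|F(v)\|^2=\sum_{w}\sum_{i:\,w\in U_i}|v(i,w)|^2=\|v\|^2$, and it is $1$-Lipschitz: applying the reverse triangle inequality on each fiber $\pi^{-1}(w)$ gives $|F(v)(w)-F(v')(w)|\le\|(v-v')|_{\pi^{-1}(w)}\|$, and summing over $w$ yields $\|F(v)-F(v')\|\le\|v-v'\|$. (That $F$ is nonlinear is harmless, as Definition \ref{def21} only asks for a map.) I then set $\beta_x=F(v_x)$, so that $\beta\colon X\to (l^2(X))_1$.

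For the variation I would estimate $\|v_x-v_y\|$ when $d(x,y)\le R$ and transport it through the $1$-Lipschitz map $F$. Writing each coordinate as $\sqrt{\phi_i(x)}\,\xi^i_x-\sqrt{\phi_i(y)}\,\xi^i_y=\sqrt{\phi_i(x)}\,(\xi^i_x-\xi^i_y)+(\sqrt{\phi_i(x)}-\sqrt{\phi_i(y)})\,\xi^i_y$ and applying $\|p+q\|^2\le 2\|p\|^2+2\|q\|^2$ and summing gives
\[
\|v_x-v_y\|^2\le 2\sum_{i:\,x,y\in U_i}\phi_i(x)\,\|\xi^i_x-\xi^i_y\|^2+2\sum_i\big(\sqrt{\phi_i(x)}-\sqrt{\phi_i(y)}\big)^2,
\]
where the coordinates with only one of $\phi_i(x),\phi_i(y)$ positive are absorbed into the second sum. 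The first sum is at most $2\epsilon_2^2\sum_i\phi_i(x)=2\epsilon_2^2$ by the uniform $(R,\epsilon_2)$ variation of the $\xi^i$, while the elementary inequality $(\sqrt a-\sqrt b)^2\le|a-b|$ together with hypothesis (1) bounds the second sum by $2\sum_i|\phi_i(x)-\phi_i(y)|\le 2\epsilon_1$. Hence $\|\beta_x-\beta_y\|\le\|v_x-v_y\|\le(2\epsilon_2^2+2\epsilon_1)^{1/2}=\epsilon$, verifying (1) for $\beta$.

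Finally, for the tail decay I would compute, grouping by the $X$-coordinate $w$ (which $F$ leaves intact),
\[
\sum_{w\notin B(x,S)}|\beta_x(w)|^2=\sum_{i}\phi_i(x)\sum_{w\in U_i,\,w\notin B(x,S)}|\xi^i_x(w)|^2.
\]
For each $i$ with $\phi_i(x)>0$ we have $x\in U_i$, and because the metric on $U_i$ is induced from $X$ the inner sum is at most $\delta(S):=\sup_i\sup_{z\in U_i}\sum_{w\in U_i,\,w\notin B(z,S)}|\xi^i_z(w)|^2$; thus the whole expression is at most $\delta(S)\sum_i\phi_i(x)=\delta(S)$, uniformly in $x$, and $\delta(S)\to 0$ as $S\to\infty$ by Definition \ref{def22}(2). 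This is condition (2), so $X$ is strongly embeddable. The only genuinely delicate points are the landing in $l^2(X)$ (handled by the fiberwise collapse $F$, which simultaneously preserves the norm, transports the variation, and leaves the $X$-tails untouched) and the combination of the two independent sources of variation, bridged by the passage from the $\ell^1$-type control in (1) to the required $\ell^2$ control via $(\sqrt a-\sqrt b)^2\le|a-b|$.
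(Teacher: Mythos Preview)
The paper does not actually prove this theorem: it is quoted from \cite{Xia21} and used as a black box throughout, so there is no in-paper proof to compare against. That said, your argument is correct and is the standard ``glue local maps against the partition of unity'' construction one expects here.

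Two small remarks worth recording. First, your passage from $\bigoplus_i l^2(U_i)$ to $l^2(X)$ via the fiberwise-norm map $F$ is the clean way to handle the target; an equivalent bookkeeping device is to embed into $l^2(I\times X)$ and then take $\beta_x(w)=\|v_x(\cdot,w)\|$, which is exactly your $F$ written differently---this is the same trick the paper itself uses in Lemma~\ref{Lemma22} when passing from $l^2(Y_i\times X_i)$ down to $l^2(Y_i)$. Second, in your variation estimate the case ``$\phi_i(x)>0$, $\phi_i(y)=0$'' indeed contributes exactly $\phi_i(x)=(\sqrt{\phi_i(x)}-\sqrt{\phi_i(y)})^2$, so it is absorbed by the second sum without the factor $2$; your inequality is therefore even a bit generous, but certainly correct. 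The choice $\epsilon_1=\epsilon^2/4$, $\epsilon_2=\epsilon/2$ makes the arithmetic close neatly.
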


Let $\mathcal{U}=\{U_i\}_{i\in I}$ be a cover of a metric space $X$. We say the multiplicity of $\mathcal{U}$ is $k$ if each point of $X$ is contained in at most $k$ elements of $\mathcal{U}$. The $R$-multiplicity of $\mathcal{U}$ is the maximum number of elements of $\mathcal{U}$ that meet a commom ball of radius $R$ in $X$ $(R>0)$. The Lebesgue number of $\mathcal{U}$ is $L$ if any ball of radius at most $L$ is contained in one element of $\mathcal{U}$. If $d(U,V)>L$ for all $U,V\in \mathcal{U}$ with $U\neq V$ then $\mathcal{U}$ is $L$-separated $(L>0)$. A cover $\mathcal{U}$ of $X$ is $(k,L)$-separated $(k\geq 0 ~\rm{and}~ L>0)$ if there is a partition of $\mathcal{U}$ into $k+1$ families $$\mathcal{U}=\mathcal{U}_0 \cup\cdots\cup\mathcal{U}_k$$ such that each family $\mathcal{U}_i$ is $L$-separated. Note that a $(k,2L)$-separated cover has $L$-multiplicity $\leq k+1$. Let
$$\mathcal{U}_L =\{U(L)|U\in\mathcal{U}\},\,U(L)=\{x\in X|d(x,U)\leq L\}.$$
Note then that: if a cover $\mathcal{U}$ of $X$ has $L$-multiplicity $\leq k+1$ then the enlarged cover $\mathcal{U}_L$ has multiplicity $\leq k+1$ and lebesgue number $L$.

\begin{definition}[see \cite{Rufus21}]
Let $X$ be a metric space. $X$ is said to have finite asymptotic dimension if there exists $k\geq 0$ such that for all $L\geq 0$ there exists a uniformly bounded cover of $X$ of Lebesgue number at least $L$ and multiplicity $k+1$. The least possible such $k$ is the asymptotic dimension of $X$.~\label{def23}
\end{definition}
The following result is part of the folklore of the subject.
\begin{lemma}[see \cite{Bell22}]
Let $X$ be a metric space and $\mathcal{U}=\{U_i\}_{i\in I}$ be a cover of $X$ with multiplicity $k$ and lebesgue number $L$. Then there is a partition of unity  $\{\phi_i\}_{i\in I}$ subordinating to the cover such that $$\sum\limits_{i\in I}|\phi_i (x)-\phi_i (y)|\leq\frac{(2k+2)(2k+3)}{L}d(x,y)$$ for any $x,y\in X$.~\label{lem21}
\end{lemma}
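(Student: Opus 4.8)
The plan is to build the partition of unity explicitly from distance functions, following the classical recipe, and then to extract the required estimate by a quotient-rule computation in which the multiplicity controls the number of nonzero terms and the Lebesgue number controls the denominator from below. For each $i\in I$ I would set
\[
\rho_i(x)=d(x,X\setminus U_i)=\inf_{z\notin U_i}d(x,z),
\]
where we may assume no $U_i$ equals $X$ (the contrary case being trivial). Each $\rho_i$ is $1$-Lipschitz, since $d(\cdot,A)$ is $1$-Lipschitz for any nonempty $A$, so $|\rho_i(x)-\rho_i(y)|\leq d(x,y)$; moreover $\rho_i(x)=0$ whenever $x\notin U_i$, so any function built from $\rho_i$ is supported in $U_i$.

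Next I would verify that the denominator is bounded below. Writing $S(x):=\sum_{i\in I}\rho_i(x)$, the Lebesgue number hypothesis says the ball $B(x,L)$ lies in a single $U_{i(x)}$; hence every point outside $U_{i(x)}$ is at distance more than $L$ from $x$, which gives $\rho_{i(x)}(x)\geq L$ and therefore $S(x)\geq L>0$. Consequently
\[
\phi_i:=\frac{\rho_i}{S}
\]
is well defined, takes values in $[0,1]$, sums to $1$ over $i$, and vanishes off $U_i$; so $\{\phi_i\}_{i\in I}$ is a partition of unity subordinate to $\mathcal{U}$, which settles everything except the quantitative estimate.

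The heart of the argument is that estimate. For fixed $x,y$ I would split each difference by the quotient rule,
\[
\phi_i(x)-\phi_i(y)=\frac{\rho_i(x)-\rho_i(y)}{S(x)}+\rho_i(y)\,\frac{S(y)-S(x)}{S(x)S(y)},
\]
sum the absolute values over $i$, and use $\sum_{i}\rho_i(y)=S(y)$ to collapse the second group of terms to $|S(x)-S(y)|/S(x)$. The multiplicity bound enters twice: a term $\rho_i(x)-\rho_i(y)$ is nonzero only if $x\in U_i$ or $y\in U_i$, and since at most $k$ members of $\mathcal{U}$ contain a given point, at most $2k$ indices contribute; combined with $1$-Lipschitzness this yields $\sum_{i}|\rho_i(x)-\rho_i(y)|\leq 2k\,d(x,y)$ and likewise $|S(x)-S(y)|\leq 2k\,d(x,y)$. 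Dividing by $S(x)\geq L$ then bounds the whole sum by a constant multiple of $d(x,y)/L$.

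The main obstacle is purely the bookkeeping of the constant: one must track how the two contributions combine and how the count of nonzero terms interacts with the multiplicity in order to land on the factor $(2k+2)(2k+3)$ claimed in the statement. In fact the careful count above already produces the sharper bound $4k\,d(x,y)/L$, which implies the stated inequality a fortiori, since $4k\leq(2k+2)(2k+3)$ for every $k\geq 0$; the larger polynomial constant is simply the safe, non-optimized value recorded in the cited source. No genuine difficulty arises beyond this constant-chasing, because every step reduces to the $1$-Lipschitzness of distance functions, the lower bound $S\geq L$ coming from the Lebesgue number, and the term-count coming from the multiplicity.
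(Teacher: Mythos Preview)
The paper does not give its own proof of this lemma; it is stated as folklore with a citation to \cite{Bell22} and used as a black box. Your construction---normalizing the $1$-Lipschitz functions $\rho_i(x)=d(x,X\setminus U_i)$ by their sum $S(x)$, bounding $S(x)\geq L$ via the Lebesgue number, and counting at most $2k$ nonzero terms via the multiplicity---is exactly the standard argument behind this folklore result, and it is correct. As you observe, it in fact yields the sharper constant $4k/L$, which dominates the stated $(2k+2)(2k+3)/L$ for all $k\geq 0$; the only minor caveat is the degenerate case where some $U_i=X$, which you correctly flag and dispose of by taking the trivial partition of unity.
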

Next we will recall the notion of {\it coarse quasi-action} which is a generalization of the notion of {\it quasi-action} central to the fundamental problem of quasi-isometry classification of finitely generated groups, see \cite{Drutu21,Kapovich21,Mosher21}.
\begin{definition}[see \cite{Nowak21}]
Let $X$ and $Y$ be metric space and $f:X\rightarrow Y$ be a map.

\vspace{2mm}(1) The map $f$ is called proper if the inverse image, under $f$, of each bounded subset of $Y$, is a bounded subset of $X$.

\vspace{2mm}(2) The map $f$ is called bornologous if there exists a real positive non-decreasing function $\ell:[0,+\infty)\rightarrow[0,+\infty)$ such that $$d(x,x')\leq r\Rightarrow d(f(x),f(x'))\leq\ell(r).$$

\vspace{2mm}(3) The map $f$ is called coarse if it is both proper and bornologous.~\label{def24}

\end{definition}
We say that two maps $f,f':X\rightarrow Y$ are close if $d(f,f')=\sup\limits_{x\in X}d(f(x),f'(x))$ is bounded. $X$ and $Y$ are coarsely equivalent if there exist coarse maps $f:X\rightarrow Y$ and $g:Y\rightarrow X$ such that $f\circ g$ and $g\circ f$ are close to the identity maps on $Y$ and on $X$ respectively.
\begin{definition}[see \cite{Beck21}]
A coarse quasi-action of a group $G$ on a metric space $X$ is an assignment of a coarse self-equivalence $f_g :X\rightarrow X$ for each element $g\in G$ such that the following conditions are satisfied:

\vspace{2mm}(1) all $f_g$ are coarse maps with respect to a uniform choice of the function $\ell$;

\vspace{2mm}(2) there exists a number $A\geq 0$ such that $d(f_{id},id_X)\leq A$;

\vspace{2mm}(3) there exists a number $B\geq 0$ such that $d(f_{g}\circ f_h ,f_{gh})\leq B$ for all elements $g,h\in G$.~\label{25}

\end{definition}

{\noindent}It is immediate from the above that $d(f_{g}\circ f_{-g} ,id_X)\leq{A+B}$ for all $g\in G$.

%%%%%%%%%%%%%%%%%%%%%%%%%%%%%%%%%%%%%%%%%%%%%%%%%%%%%%%%%%%%%%%%%%%%%%%%%%%%%%%%%%%%%%%%%
\section{Direct limits for the metric space}
In this section, we will prove that the strong embeddability is closed under the direct limit for the metric space. Note that this result is rather different form (\cite{Xia22}, Theorem 4.3). The group structure on the direct limit is essential to enable one to drop the boundedness assumptions necessary for this result.

First, we have the following fact.
\begin{lemma}
Let $\{X_i\}_{i\in I}$ be a collection of equi-strongly embeddable metric spaces. If $Y_i$ is the subspace of $X_i$ for each $i$, then $\{Y_i\}_{i\in I}$ is also equi-strongly embeddable.~\label{Lemma22}
\end{lemma}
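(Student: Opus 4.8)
The plan is to restrict the given family of maps from the ambient spaces to the subspaces, renormalize, and then enlarge slightly to kill the entries supported outside $Y_i$ while keeping control of the variation. More precisely, fix $R,\epsilon>0$. Since $\{X_i\}_{i\in I}$ is equi-strongly embeddable, choose (for a parameter $\epsilon'$ and radius $R$ to be fixed later) a family of maps $\xi^i:X_i\rightarrow(l^2(X_i))_1$ as in Definition \ref{def22}, with $(R,\epsilon')$ variation and with the uniform tail decay $\sup_{i}\sup_{x\in X_i}\sum_{w\notin B(x,S)}|\xi^i_x(w)|^2\to 0$ as $S\to\infty$. For each $i$ and each $y\in Y_i$ we would like to produce $\eta^i_y\in(l^2(Y_i))_1$. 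The natural first attempt is to let $\tilde\eta^i_y$ be the restriction of $\xi^i_y$ to $Y_i\subseteq X_i$ (that is, $\tilde\eta^i_y(w)=\xi^i_y(w)$ for $w\in Y_i$) and then normalize, $\eta^i_y=\tilde\eta^i_y/\|\tilde\eta^i_y\|$.

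The first thing to check is that this is well defined and has good norm control: $\|\tilde\eta^i_y\|^2=\sum_{w\in Y_i}|\xi^i_y(w)|^2$, and we must bound this below away from $0$, uniformly in $i$ and $y$. This is where the tail condition enters: since $y\in Y_i$, taking $S$ large enough (depending only on $\epsilon$, via condition (2) for the family $\xi^i$) we get $\sum_{w\notin B(y,S)}|\xi^i_y(w)|^2\le 1/4$, hence $\sum_{w\in B(y,S)}|\xi^i_y(w)|^2\ge 3/4$; but $B(y,S)\cap Y_i$ need not be all of $B(y,S)$, so this by itself does not bound $\|\tilde\eta^i_y\|$ below. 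The honest fix is the standard trick of averaging or, better, of translating the relevant mass onto $Y_i$: define $\eta^i_y$ by first ``folding'' the weight of $\xi^i_y$ near $y$ onto a single point $y\in Y_i$ plus a small spread, i.e. replace the construction by $\zeta^i_y=(1-\|\xi^i_y|_{X_i\setminus Y_i}\|^2)^{1/2}\,\delta_y+\xi^i_y|_{Y_i}$ suitably normalized — but the clean and commonly used route here is simply to note that for the purposes of strong embeddability we may compose the $\xi^i$ with the nearest-point type maps. I will instead run the argument as follows: keep $\eta^i_y=\tilde\eta^i_y/\|\tilde\eta^i_y\|$ but argue the lower bound using that the problematic mass $\sum_{w\in B(y,S)\setminus Y_i}|\xi^i_y(w)|^2$ is controlled after choosing the original $\epsilon'$ small and $R$ large: for $w\in B(y,S)$ and any fixed chain of points in $X_i$ of length $\le S/R$ from $y$ to $w$... this does not immediately work either, since a ball in $X_i$ may have mass far from $Y_i$.

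Given this, the cleanest correct approach — and the one I would actually write — is to push the off-$Y_i$ mass onto the point $y$ itself. Define $c^i_y=\big(\sum_{w\in X_i\setminus Y_i}|\xi^i_y(w)|^2\big)^{1/2}$ and set
\[
\eta^i_y = c^i_y\,\delta_y \;+\; \xi^i_y\big|_{Y_i}\,\in\, l^2(Y_i),
\]
which has $\|\eta^i_y\|=\|\xi^i_y\|=1$ automatically, so $\eta^i_y\in(l^2(Y_i))_1$ with no normalization needed. For the $(R,\epsilon)$ variation: if $y,y'\in Y_i$ with $d(y,y')\le R$, then $\|\xi^i_y|_{Y_i}-\xi^i_{y'}|_{Y_i}\|\le\|\xi^i_y-\xi^i_{y'}\|\le\epsilon'$; also $|c^i_y-c^i_{y'}|\le\|\xi^i_y-\xi^i_{y'}\|\le\epsilon'$ since $c^i_y,c^i_{y'}$ are norms of restrictions of those two vectors to the common set $X_i\setminus Y_i$; and $\|\delta_y-\delta_{y'}\|\le\sqrt 2$, but this last term is multiplied by $|c^i_y|$ which need not be small — so I bound $\|c^i_y\delta_y-c^i_{y'}\delta_{y'}\|\le|c^i_y-c^i_{y'}|+c^i_{y'}\|\delta_y-\delta_{y'}\|\le\epsilon'+\sqrt2\,c^i_{y'}$, and the stray term again fails to be small. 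The genuine resolution, which I will adopt, is to instead spread $c^i_y$ not onto $\delta_y$ but onto $\xi^i_y|_{Y_i}$ renormalized when that is nonzero, and handle the (uniformly rare, by the tail bound) region where it is nearly zero separately; equivalently, one applies the already-available machinery: a subspace inclusion $Y_i\hookrightarrow X_i$ is a coarse embedding with the relevant uniformity, and Lemma-type permanence of equi-strong embeddability under such uniform coarse embeddings gives the claim. The main obstacle, then, is precisely this normalization/lower-bound issue — ensuring the restricted vectors do not degenerate uniformly — and the key step is to choose, via condition (2) of Definition \ref{def22}, a uniform $S_0$ so that outside $B(\cdot,S_0)$ the mass is $<1/2$ and then transport the at-most-$1/2$ exterior mass back onto the $3/4$-mass core inside $Y_i$ by rescaling, which changes the variation by at most a universal constant times $\epsilon'$; taking $\epsilon'=\epsilon/C$ for that constant $C$ finishes the proof.
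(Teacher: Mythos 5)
You never actually close the gap you correctly identify. You observe, rightly, that the naive restriction $\xi^i_y|_{Y_i}$ (normalized or not) can have arbitrarily small, indeed zero, mass on $Y_i$, since the vector $\xi^i_y$ may be supported almost entirely on $X_i\setminus Y_i$. But your final ``resolution'' reinstates exactly the false premise you had already rejected: you propose to ``transport the at-most-$1/2$ exterior mass back onto the $3/4$-mass core inside $Y_i$'', whereas condition (2) of Definition \ref{def22} only guarantees that $3/4$ of the mass lies in $B(y,S_0)$, not in $B(y,S_0)\cap Y_i$; there may be no core inside $Y_i$ at all (e.g.\ $X_i=\Zb$, $Y_i=2\Zb$, with $\xi^i_y$ supported on odd integers). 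The intermediate variants (adding $c^i_y\,\delta_y$, etc.) you yourself show to fail, and the alternative appeal to ``permanence of equi-strong embeddability under uniform coarse embeddings'' is circular here: strong embeddability requires maps into $(\ell^2(Y_i))_1$, and converting the given $\ell^2(X_i)$-valued vectors into $\ell^2(Y_i)$-valued ones with uniform control is precisely the content of the lemma, so there is no ready-made permanence statement of this kind to quote.

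The missing idea, which is how the paper argues, is to move mass point-by-point in the target instead of rescaling: choose for each $x\in X_i\setminus Y_i$ a point $p_i(x)\in Y_i$ with $d_i(x,p_i(x))\le 2\,d_i(x,Y_i)$, set $p_i=\mathrm{id}$ on $Y_i$, and define $\eta^i_y(t)=\bigl(\sum_{x\in p_i^{-1}(t)}|\beta^i_y(x)|^2\bigr)^{1/2}$ for $t\in Y_i$. This $\ell^2$-rearrangement preserves the norm exactly, so $\eta^i_y$ is a unit vector and the degeneration problem disappears; it is $1$-Lipschitz in the vectors by the reverse triangle inequality, so the $(R,\epsilon)$ variation passes through unchanged; and since $d_i(x,p_i(x))\le 2\,d_i(x,y)$ for $y\in Y_i$, any mass landing outside $B(y,S)$ must come from points $x$ with $d_i(x,y)>S/3$, so the uniform tail condition survives. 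Without a device of this sort (pushing mass onto approximate nearest points of $Y_i$, rather than discarding or renormalizing it) your argument does not go through.
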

\begin{proof}
For each $i\in I$, suppose $d_i$ is the metric on $X_i$. Let $p_i :X_i \rightarrow Y_i$ be a function defined by
$$p_i (x)= \begin{cases} y, &\rm{if}~~x\in X_i\setminus Y_i,~d_i (x,y)\leq 2\,d_i (x,Y_i);\\
   x, &\rm{if}~~x\in Y_i. \end{cases}$$
Let $R>0$ and $\epsilon >0$, as in Definition \ref{def22}, there exists a family of maps $\beta^i:X_i \rightarrow(l^2 (X_i))_1$ satisfying:

\vspace{2mm}(1) for each $i\in I$, $\beta^i$ has $(R,~\epsilon)$ variation;

\vspace{2mm}(2) $\lim\limits_{S\rightarrow \infty}\sup\limits_{i\in I}\sup\limits_{x\in X_i}\sum\limits_{w\notin B(x,S)}\mid\beta_{x}^i (w)\mid^2=0$.

Then, for each $i\in I$ we define an isometry $\alpha^i :\ell^2 (X_i)\rightarrow \ell^2 (Y_i \times X_i)$ by

$$\alpha^i (\zeta)(y,x)= \begin{cases} \zeta(x), &\rm{if}~~y=p_i (x);\\
   0, &\rm{if}~~otherwise. \end{cases}$$
for each $\zeta\in\ell^2 (X_i)$.

Define $\xi^i :Y_i \rightarrow \ell^2 (Y_i \times X_i)$ by $\xi_y ^i (t,s)=\alpha^i (\beta_y ^i)(t,s)$ for $t\in Y_i,\,s\in X_i$.
Note that for any $y\in Y_i$,
\begin{align*}
\|\xi_y ^i\|^2_{\ell^2 (Y_i \times X_i)}=&\sum\limits_{(t,s)\in Y_i \times X_i}|\xi_y ^i (t,s)|^2\\
=&\sum\limits_{(t,s)\in Y_i \times X_i}|\alpha^i (\beta_y ^i)(t,s)|^2\\
=&\sum\limits_{s\in X_i}|\beta_y ^i(s)|^2\\
=&\|\beta_y ^i\|_{\ell^2 (X_i)}^2\\
=&1.
\end{align*}
Similarly, for any $y,y'\in Y_i$,
$$\|\xi_y ^i -\xi_{y'}^i\|_{\ell^{2}(Y_i \times X_i)}^2 =\|\beta_y ^i -\beta_{y'}^i\|_{\ell^{2}(X_i)}^2$$
Now we define $\eta^i :Y_i \rightarrow \ell^2 (Y_i)$ by
$$\eta_{y}^{i}(t)=\|\xi_{y}^{i}(t,\cdot)\|_{\ell^2 (X_i)}$$
for $t\in Y_i$.

Note that for any $y\in Y_i$,
$$\|\eta_{y}^i\|^2 _{\ell^2 (Y_i)}=\sum\limits_{t\in Y_i}|\eta_{y}^{i}(t)|^2=\sum\limits_{t\in Y_i}\|\xi_{y}^{i}(t,\cdot)\|^2 _{\ell^2 (X_i)}=\|\xi_y ^i\|^2 _{\ell^{2}(Y_i \times X_i)} =1.$$
Then for any $y,y'\in Y_i$ and $d_i (y,y')\leq R$, we have
\begin{align*}
\|\eta_y ^i -\eta_{y'}^i\|_{\ell^{2}(Y_i)}^2=&\sum\limits_{t\in Y_i}|\eta_{y}^i (t)-\eta_{y'}^i (t)|^2\\
=&\sum\limits_{t\in Y_i}|~\|\xi_{y}^{i}(t,\cdot)\|_{\ell^2 (X_i)}-\|\xi_{y'}^{i}(t,\cdot)\|_{\ell^2 (X_i)}|^2\\
\leq&\sum\limits_{t\in Y_i}\|\xi_{y}^{i}(t,\cdot)-\xi_{y'}^{i}(t,\cdot)\|^2 _{\ell^2 (X_i)}\\
=&\|\xi_y ^i -\xi_{y'}^i\|_{\ell^{2}(Y_i \times X_i)}^2\\
=&\|\beta_y ^i -\beta_{y'}^i\|_{\ell^{2}(X_i)}^2\\
\leq&\epsilon.
\end{align*}
It follows that $\eta^i$ has $(R,\,\epsilon)$ variation.

{\noindent}Moreover,
\begin{align*}
&\lim\limits_{S\rightarrow \infty}\sup\limits_{i\in I}\sup\limits_{y\in Y_i}\sum\limits_{z\notin B(y,S)}\mid\eta_{y}^i (z)\mid^2\\
=&\lim\limits_{S\rightarrow \infty}\sup\limits_{i\in I}\sup\limits_{y\in Y_i}\sum\limits_{z\notin B(y,S)}\|\xi_{y}^{i}(z,\cdot)\|^{2}_{\ell^{2}(X_i)}\\
=&\lim\limits_{S\rightarrow \infty}\sup\limits_{i\in I}\sup\limits_{y\in Y_i}\sum\limits_{z\notin B(y,S)}\sum\limits_{x\in X_i}|\alpha^{i}(\beta_{y}^i)(z,x)|^2\\
=&\lim\limits_{S\rightarrow \infty}\sup\limits_{i\in I}\sup\limits_{y\in Y_i}\sum\limits_{x\in X_i}\sum\limits_{z\notin B(y,S)}|\alpha^{i}(\beta_{y}^i)(z,x)|^2\\
=&\lim\limits_{S\rightarrow \infty}\sup\limits_{i\in I}\sup\limits_{y\in Y_i}\sum\limits_{x\in Y_i}\sum\limits_{z\notin B(y,S)}|\alpha^{i}(\beta_{y}^i)(z,x)|^2\\&+\lim\limits_{S\rightarrow \infty}\sup\limits_{i\in I}\sup\limits_{y\in Y_i}\sum\limits_{x\in X_i\setminus Y_{i}}\sum\limits_{z\notin B(y,S)}|\alpha^{i}(\beta_{y}^i)(z,x)|^2\\
=&\lim\limits_{S\rightarrow \infty}\sup\limits_{i\in I}\sup\limits_{y\in Y_i}\sum\limits_{x\notin B(y,S)}|\beta_{y}^i (x)|^2 +\lim\limits_{S\rightarrow \infty}\sup\limits_{i\in I}\sup\limits_{y\in Y_i}\sum\limits_{x\in X_i \setminus Y_i}|\beta_{y}^i (x)|^2\\
\leq&\lim\limits_{S\rightarrow \infty}\sup\limits_{i\in I}\sup\limits_{y\in X_i}\sum\limits_{x\notin B(y,S)}|\beta_{y}^i (x)|^2 +\lim\limits_{S\rightarrow \infty}\sup\limits_{i\in I}\sup\limits_{y\in X_i}\sum\limits_{x\notin B(y,S)}|\beta_{y}^i (x)|^2\\
=&0.
\end{align*}
Then the claim follows.

\end{proof}

\begin{lemma}
Let $c>0$. Let $\{X_i\}_{i\in I}$ be a collection of metric spaces and $Y_i$ is a $c$-net of $X_i$ for each $i\in I$. If $\{Y_i\}_{i\in I}$ is equi-strongly embeddable, then so is $\{X_i\}_{i\in I}$.~\label{Lemma23}
\end{lemma}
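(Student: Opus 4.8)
The plan is to transfer the equi-strong embeddability from the nets $Y_i$ to the ambient spaces $X_i$ by pulling back the Hilbert-space valued maps along a coarse retraction. Since $Y_i$ is a $c$-net of $X_i$, for each $i$ I would fix a map $q_i:X_i\to Y_i$ with $d_i(x,q_i(x))\leq c$ for all $x\in X_i$ and $q_i(y)=y$ for $y\in Y_i$; this is a bornologous map with control function independent of $i$ (if $d_i(x,x')\leq r$ then $d_i(q_i(x),q_i(x'))\leq r+2c$). Given $R,\epsilon>0$, apply Definition \ref{def22} to the family $\{Y_i\}$ with parameters $R'=R+2c$ and the given $\epsilon$, obtaining maps $\beta^i:Y_i\to(\ell^2(Y_i))_1$ with $(R+2c,\epsilon)$ variation and the uniform tail-decay condition (2).

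Next I would define $\xi^i:X_i\to(\ell^2(X_i))_1$ essentially by $\xi^i_x=\iota_i(\beta^i_{q_i(x)})$, where $\iota_i:\ell^2(Y_i)\hookrightarrow\ell^2(X_i)$ is the isometric inclusion induced by $Y_i\subseteq X_i$ (extending a function on $Y_i$ by zero). Then $\|\xi^i_x\|=\|\beta^i_{q_i(x)}\|=1$, so condition (1) of Definition \ref{def22} for the co-domain is met. For the variation: if $d_i(x,x')\leq R$ then $d_i(q_i(x),q_i(x'))\leq R+2c$, so $\|\xi^i_x-\xi^i_{x'}\|=\|\beta^i_{q_i(x)}-\beta^i_{q_i(x')}\|\leq\epsilon$, which is the $(R,\epsilon)$ variation required. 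For the tail condition, I would estimate, for $x\in X_i$ and $w\notin B_{X_i}(x,S)$: since $\xi^i_x(w)=\beta^i_{q_i(x)}(w)$ and this vanishes unless $w\in Y_i$, and then $d_i(q_i(x),w)\geq d_i(x,w)-c>S-c$, so $w\notin B_{Y_i}(q_i(x),S-c)$. Hence $\sum_{w\notin B_{X_i}(x,S)}|\xi^i_x(w)|^2\leq\sum_{w\notin B_{Y_i}(q_i(x),S-c)}|\beta^i_{q_i(x)}(w)|^2$, and taking $\sup$ over $x\in X_i$ and $i\in I$ and then $S\to\infty$ gives $0$ by condition (2) for $\{Y_i\}$. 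This establishes that $\{X_i\}_{i\in I}$ is equi-strongly embeddable.

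The only genuinely delicate point is bookkeeping the shift in parameters: one must apply the hypothesis for $\{Y_i\}$ at radius $R+2c$ (to absorb the distortion of the retraction $q_i$) and note that the tail decay is only mildly shifted (by $c$), which does not affect the limit. Everything else is a routine isometry-pullback computation of the kind already carried out in Lemma \ref{Lemma22}; in fact one can alternatively deduce this lemma from Lemma \ref{Lemma22} applied to a suitable family of spaces together with a direct check, but the direct argument above is cleaner. I expect no essential obstacle beyond making the uniform constants explicit.
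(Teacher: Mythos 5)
Your argument is correct, and it is essentially the standard one: the paper itself omits the proof, deferring to Lemma 5.2 of \cite{Xia22}, which proceeds by exactly this kind of nearest-point retraction onto the net followed by an isometric extension-by-zero pullback (the mirror image of the computation in Lemma \ref{Lemma22}). The only point worth making explicit is that the retraction $q_i$ with $d_i(x,q_i(x))\leq c$ exists because the spaces are assumed uniformly discrete with bounded geometry (so nearest points in $Y_i$ are attained); otherwise one simply replaces $c$ by $2c$ throughout, which does not affect the argument.
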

\begin{proof}
This proof is analogous to (\cite{Xia22}, Lemma 5.2), so we omit it.
\end{proof}

\begin{proposition}
Let $X_1\subseteq X_2\subseteq X_3\subseteq\cdots$ be an increasing sequence of bounded metric space, and let $X=\bigcup_{n=1}^\infty X_n$. Assume also that any bounded subset of $X$ is contained in some $X_n$. If $\{X_n\}_{n=1}^\infty$ is equi-strongly embeddable, then $X$ is strongly embeddable.

\end{proposition}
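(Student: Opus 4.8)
The plan is to verify the partition-of-unity criterion of Theorem~\ref{thm21}: for every $R,\epsilon>0$ I must exhibit a partition of unity on $X$ whose members vary by at most $\epsilon$ between points at distance $\le R$, and which is subordinate to an equi-strongly embeddable cover of $X$. Everything else is routine once the right cover is identified.

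The first decision is which cover to use, and here the exhaustion $\{X_n\}$ itself is the \emph{wrong} choice: the ``entry level'' $x\mapsto\min\{n:x\in X_n\}$ can jump arbitrarily fast between neighbouring points — already for $X=\Nb$ with $X_n=\{0,1,\dots,\lfloor\sqrt n\rfloor\}$ it grows quadratically along the obvious ray — so no slowly varying partition of unity is subordinate to $\{X_n\}$ in general. Instead I will re-index the exhaustion radially. Fix a basepoint $x_0\in X$, set $r(x)=d(x,x_0)$ (a $1$-Lipschitz function), and put $B_n=B(x_0,n)\cap X=\{x\in X:r(x)\le n\}$ for $n\in\Nb$. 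Each $B_n$ is a bounded subset of $X$, hence is contained in some $X_{m(n)}$ by hypothesis; the reindexed subfamily $\{X_{m(n)}\}_{n\in\Nb}$ is again equi-strongly embeddable, so Lemma~\ref{Lemma22} shows the cover $\mathcal B=\{B_n\}_{n\in\Nb}$ of $X$ is equi-strongly embeddable. (Equivalently, one may simply replace the original exhaustion by $\{B_n\}$ at the outset.)

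It then remains to produce, for given $R,\epsilon>0$, a partition of unity subordinate to $\mathcal B$ with $\epsilon$-variation at scale $R$; since $B_n=\{r\le n\}$ with $r$ $1$-Lipschitz, this is a standard radial tent-function construction. Choose an integer $W\ge 2(R+1)/\epsilon$, let $g\colon\Zb\to[0,1]$ be the normalized discrete tent supported on $\{0,1,\dots,2W\}$ with apex at $W$ (so $\sum_{k}g(k)=1$ and $\sum_{k}|g(k)-g(k-1)|=2/W$), and set $\phi_n(x)=g\big(n-\lceil r(x)\rceil-W\big)$ for $n\in\Nb$. Then $\sum_n\phi_n\equiv 1$; any $n$ with $\phi_n(x)\neq 0$ satisfies $n\ge\lceil r(x)\rceil\ge r(x)$, so $\phi_n$ vanishes outside $B_n$; and if $d(x,y)\le R$ then $\big|\lceil r(x)\rceil-\lceil r(y)\rceil\big|\le R+1$, whence
\[
\sum_{n\in\Nb}\big|\phi_n(x)-\phi_n(y)\big|\ \le\ \big|\lceil r(x)\rceil-\lceil r(y)\rceil\big|\cdot\frac{2}{W}\ \le\ (R+1)\cdot\frac{2}{W}\ \le\ \epsilon .
\]
Theorem~\ref{thm21} then gives that $X$ is strongly embeddable. (Alternatively one could thicken $\mathcal B$ to an annular cover of multiplicity two and Lebesgue number $\sim W$ and appeal to Lemma~\ref{lem21}; the direct construction above is shorter.)

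The step that needs an idea, rather than bookkeeping, is precisely the choice of cover: one must avoid $\{X_n\}$ and use the metric balls $B(x_0,n)\cap X$. Their equi-strong embeddability is immediate from Lemma~\ref{Lemma22} together with the hypothesis that every bounded subset of $X$ lies in some $X_n$, and the radial coordinate being $1$-Lipschitz makes the tent partition of unity work without any further assumption on how the $X_n$ were arranged. Beyond this I expect no genuine obstacle.
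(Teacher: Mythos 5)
Your argument is correct, and it reaches Theorem~\ref{thm21} by a genuinely different route than the paper. The paper extracts a subsequence $\{X_{n_k}\}$ with $\bigcup_{x\in X_{n_k}}\bar B(x,3L)\subseteq X_{n_{k+1}}$, covers $X$ by the $L$-thickened annuli $U_k=\bigcup_{x\in X_{n_{k+1}}\setminus X_{n_k}}\bar B(x,L)$ (multiplicity $\le 2$, Lebesgue number $\ge L$), proves equi-strong embeddability of $\{U_k\}$ by combining Lemma~\ref{Lemma22} (the annuli are subspaces of the $X_{n_k}$) with Lemma~\ref{Lemma23} (each annulus is an $L$-net of its thickening), and then gets the required partition of unity from the generic Lemma~\ref{lem21}. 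You instead take the nested cover $B_n=B(x_0,n)\cap X$, whose equi-strong embeddability follows from Lemma~\ref{Lemma22} alone because each $B_n$ is bounded and hence sits inside some $X_{m(n)}$, and you build the partition of unity explicitly as tent functions of the $1$-Lipschitz radial coordinate; since Theorem~\ref{thm21} places no multiplicity or boundedness requirement on the cover, the infinite multiplicity of $\{B_n\}$ is harmless. Your route avoids the subsequence extraction, the net lemma (Lemma~\ref{Lemma23}), and Lemma~\ref{lem21} at the cost of a short explicit computation (the telescoping bound $\sum_k|g(k)-g(k+m)|\le |m|\cdot 2/W$ is the only thing to check, and it is fine); the paper's route is more in the spirit of the asymptotic-dimension machinery used in its later sections and produces a uniformly "thin" cover, which is aesthetically closer to how such decompositions are reused elsewhere, but neither approach is stronger than the other for this statement. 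Your opening observation that the exhaustion $\{X_n\}$ itself cannot serve as the cover is also well taken and explains why some re-indexing (yours radial, the paper's by separation) is genuinely needed.
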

\begin{proof}
Take $L>0$. For given $X_{n_k}$, since each $X_n$ is bounded, we can choose \vspace{2mm}$X_{n_{k+1}}$ satisfying $\bigcup\limits_{x\in X_{n_k}}\bar{B}(x,3L)\subseteq X_{n_{k+1}}$.
Thus we obtain a subsequence $\{X_{n_k}\}$ of $\{X_n\}$ such that $$(\bigcup\limits_{x\in X_{n_k}}\bar{B}(x,L))\,\bigcap \,(\bigcup\limits_{x\in X_{n_{k+2}}\backslash X_{n_{k+1}}}\bar{B}(x,L))=\varnothing.$$
Set $$U_k=\bigcup\limits_{x\in X_{n_{k+1}}\setminus X_{n_k}}\bar{B}(x,L).$$
for every $k\geq 1$.

Then we obtain a cover $\mathcal{U}=\{U_k\}_{k=1}^\infty$ of multiplicity at most 2 and Lebesgue number at least $L$. We claim that $\mathcal{U}$ is equi-strongly embeddable. Indeed, $\{X_{n_{k+1}}\backslash X_{n_k}\}$ are subspaces of the equi-strongly embeddable sequence $\{X_{n_k}\}$. It follows from Lemma \ref{Lemma22} that $\{X_{n_{k+1}}\backslash X_{n_k}\}$ is equi-strongly embeddable. Note that for every $k$, $\{X_{n_{k+1}}\backslash X_{n_k}\}$ is a $L$-net of $U_k$. By lemma \ref{Lemma23}, $\{U_k\}_{k=1}^\infty$ is equi-strongly embeddable.

By choosing $L$ large enough, the result follows from Lemma \ref{lem21} and Theorem \ref{thm21}.
\end{proof}

%%%%%%%%%%%%%%%%%%%%%%%%%%%%%%%%%%%%%%%%%%%%%%%%%%%%%%%%%%%%%%%%%%%%%%%%%%%%%%%%%%%%%%%%%%%%%%%%%%%%%%%
\section{Fibering permanence}
In this section, we show an important property of strong embeddability which is called fibrering permanence. The main motivating examples are fibre bundles $p:X\rightarrow Y$ with base space $Y$ and total space $X$. Moreover, fibering permanence is somewhat more subtle than the other permanence properties, and care must be taken to formulate it correctly for our other basic properties.
\begin{proposition}
Let $X$ and $Y$ be two metric spaces and $f:X\rightarrow Y$ be an uniformly expansive map. Assume that $Y$ has Property A. If for every uniformly bounded cover $\{U_i\}_{i\in I}$ of $Y$, the collection $\{f^{-1}(U_i)\}_{i\in I}$ of subspaces of $X$ is equi-strongly embeddable, then $X$ is strongly embeddable.~\label{prop21}

\end{proposition}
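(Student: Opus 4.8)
The plan is to reduce to Theorem~\ref{thm21} by pulling back along $f$ a suitable partition of unity on $Y$ supplied by Property A. The key input is the partition-of-unity reformulation of Property A: a metric space $Y$ has Property A if and only if for every $S,\epsilon>0$ there is a uniformly bounded cover $\{U_i\}_{i\in I}$ of $Y$ together with a subordinate partition of unity $\{\psi_i\}_{i\in I}$ such that $d(y,y')\leq S$ implies $\sum_{i\in I}|\psi_i(y)-\psi_i(y')|\leq\epsilon$.

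First I would fix $R,\epsilon>0$. Since $f$ is uniformly expansive, there is $S>0$ with $d(x,x')\leq R\Rightarrow d(f(x),f(x'))\leq S$. Applying the reformulation of Property A above to $Y$ with the parameters $S$ and $\epsilon$, I obtain a uniformly bounded cover $\mathcal{U}=\{U_i\}_{i\in I}$ of $Y$ and a subordinate partition of unity $\{\psi_i\}_{i\in I}$ with the corresponding $\ell^1$-variation estimate.

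Next I would set $\phi_i:=\psi_i\circ f\colon X\to[0,1]$ and $V_i:=f^{-1}(U_i)$. Then $\sum_{i\in I}\phi_i(x)=\sum_{i\in I}\psi_i(f(x))=1$, so $\{\phi_i\}_{i\in I}$ is a partition of unity on $X$; and if $\phi_i(x)\neq 0$ then $\psi_i(f(x))\neq 0$, hence $f(x)\in U_i$ and $x\in V_i$, so $\{\phi_i\}_{i\in I}$ is subordinate to the cover $\mathcal{V}=\{V_i\}_{i\in I}$ of $X$. Moreover, if $d(x,x')\leq R$ then $d(f(x),f(x'))\leq S$, whence
\[
\sum_{i\in I}|\phi_i(x)-\phi_i(x')|=\sum_{i\in I}|\psi_i(f(x))-\psi_i(f(x'))|\leq\epsilon ,
\]
which is condition (1) of Theorem~\ref{thm21}. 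Since $\mathcal{U}$ is uniformly bounded, the hypothesis of the proposition gives that $\mathcal{V}=\{f^{-1}(U_i)\}_{i\in I}$ is equi-strongly embeddable, which is condition (2). Theorem~\ref{thm21} then yields that $X$ is strongly embeddable.

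There is no genuinely hard step here: the argument is essentially a pullback. The only point requiring care is to invoke the correct ($\ell^1$-partition-of-unity) characterization of Property A, so that the variation estimate transfers verbatim to $X$ and matches hypothesis (1) of Theorem~\ref{thm21}; everything else is bookkeeping about preimages of covers and uniform expansiveness of $f$.
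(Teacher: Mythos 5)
Your argument is correct and is essentially identical to the paper's proof: both fix $R,\epsilon$, use uniform expansiveness to obtain $S$, invoke the partition-of-unity characterization of Property A on $Y$ for the parameters $(S,\epsilon)$, pull the partition of unity back along $f$, and apply Theorem~\ref{thm21} to the cover $\{f^{-1}(U_i)\}_{i\in I}$, which is equi-strongly embeddable by hypothesis. No gaps; your write-up actually spells out the verification the paper compresses into ``clearly''.
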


\begin{proof}
Let $R,\epsilon >0$ be given. Since $f$ is uniformly expansive, there exists $S>0$, such that $d(f(x),f(y))\leq S$ whenever $d(x,y)\leq R$. Since $Y$ has property A, by one of the equivalent definitions of property A (see \cite{Rufus21}), there exists an uniformly bounded cover $\mathcal{U}=\{U_i\}_{i\in I}$ of $Y$, together with a partition of unity $\{\phi_i\}_{i\in I}$ subordinated to $\mathcal{U}$, such that $$\sum\limits_{i\in I}|\phi_{i}(y)-\phi_{i}(y')|\leq\epsilon$$ for $y,y'\in Y$, $d(y,y')\leq S$.

We define $\varphi_i =\phi_i \circ f$ for each $i\in I$. Clearly, $\{\varphi_i\}_{i\in I}$ is a partition of unity on $X$ subordinated to $\{f_{-1}(U_i)\}_{i\in I}$ and satisfying the assumptions of Theorem \ref{thm21}. Thus $X$ is strongly embeddable.
\end{proof}
\begin{corollary}
Let $X$ and $Y$ be two metric spaces and $f:X\rightarrow Y$ be a Lipschitz map of metric spaces. Suppose that a group $G$ acts by isometries on both $X$ and $Y$, that the action on $Y$ is transtitive and that $f$ is $G$-equivariant. Suppose that $Y$ has property A. If there exists $y_0 \in Y$ satisfying for every $n\in\mathbb{N}$ the inverse image $f^{-1}(B(y_0 ,n))$ is strongly embeddable, then $X$ is strongly embeddable.
\end{corollary}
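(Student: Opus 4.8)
The plan is to reduce the statement to Proposition \ref{prop21} applied to the given map $f$. That proposition has three hypotheses: that $f$ is uniformly expansive, that $Y$ has property A, and that for every uniformly bounded cover $\{U_i\}_{i\in I}$ of $Y$ the family $\{f^{-1}(U_i)\}_{i\in I}$ of subspaces of $X$ is equi-strongly embeddable. The first is automatic, since a Lipschitz map with constant $C$ satisfies $d(f(x),f(x'))\le CR$ whenever $d(x,x')\le R$; the second is assumed. So the whole argument comes down to verifying the third hypothesis from the single assumption that $f^{-1}(B(y_0,n))$ is strongly embeddable for every $n\in\mathbb{N}$, and this is where the transitive $G$-action does its work.

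First I would use transitivity to normalize the members of a cover. Fix a uniformly bounded cover $\mathcal{U}=\{U_i\}_{i\in I}$ of $Y$, say $\operatorname{diam}(U_i)\le D$ for all $i$, and set $n=\lceil D\rceil+1$. For each $i$ with $U_i\neq\varnothing$ choose $y_i\in U_i$ and, using transitivity of the action on $Y$, an element $g_i\in G$ with $g_iy_i=y_0$. Since $G$ acts on $Y$ by isometries, $g_iU_i$ has diameter $\le D$ and contains $y_0$, hence $g_iU_i\subseteq B(y_0,n)$. Using $G$-equivariance of $f$ one checks directly that $f^{-1}(g_iU_i)=g_if^{-1}(U_i)$, and therefore
$$f^{-1}(U_i)=g_i^{-1}f^{-1}(g_iU_i)\subseteq g_i^{-1}f^{-1}(B(y_0,n)).$$
Because $G$ also acts on $X$ by isometries, $g_i^{-1}$ is an isometry of $X$, so $Z_i:=g_i^{-1}f^{-1}(B(y_0,n))$ is isometric to $f^{-1}(B(y_0,n))$, which is strongly embeddable by hypothesis.

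Next I would appeal to the subspace permanence of equi-strong embeddability. The constant family $\{Z_i\}_{i\in I}$ is equi-strongly embeddable: each $Z_i$ is isometric to the one fixed strongly embeddable space $f^{-1}(B(y_0,n))$, so a single system of Hilbert-space-valued maps witnessing strong embeddability of $f^{-1}(B(y_0,n))$ can be transported through these isometries to give the required uniform family. Since $f^{-1}(U_i)$ is a subspace of $Z_i$ for every $i$ with $U_i\neq\varnothing$ (and $f^{-1}(U_i)=\varnothing$ otherwise, which is harmless), Lemma \ref{Lemma22} shows that $\{f^{-1}(U_i)\}_{i\in I}$ is equi-strongly embeddable. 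This verifies the third hypothesis of Proposition \ref{prop21}, and that proposition yields that $X$ is strongly embeddable.

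The argument is mostly bookkeeping; the one point requiring care is the interplay of the equivariance identity $f^{-1}(gU)=gf^{-1}(U)$ with the choice of a \emph{uniform} radius $n$ depending only on the diameter bound $D$ of the cover. It is precisely this uniformity that lets all of the subspaces $f^{-1}(U_i)$ be placed inside isometric copies of a single strongly embeddable space, so that Lemma \ref{Lemma22} applies; without the transitive action (which provides the translating elements $g_i$) one could not arrange this.
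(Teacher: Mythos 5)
Your proposal is correct and follows essentially the same route as the paper: use transitivity and isometric action to translate each $f^{-1}(U_i)$ into (an isometric copy of) a subspace of the single strongly embeddable space $f^{-1}(B(y_0,n))$, deduce equi-strong embeddability of $\{f^{-1}(U_i)\}_{i\in I}$ via the subspace permanence of Lemma \ref{Lemma22}, and conclude with Proposition \ref{prop21}. You merely make explicit a few details the paper leaves implicit (the choice of $n$ from the diameter bound and the Lipschitz $\Rightarrow$ uniformly expansive step), which is fine.
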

\begin{proof}
Let $\{U_i\}_{i\in I}$ be a uniformly bounded cover of $Y$. Note that the action of $G$ on $Y$ is isometrical and transitive, there exists $n\in \mathbb{N}$ and $g_i \in G$ such that $g_i U_i\subseteq B(y_0 ,n)$ for all $i\in I$. Note also that $g_i f^{-1}(U_i)=f^{-1}(g_i U_i)\subseteq f^{-1}(B(y_0 ,n))$, we have that the collection $\{f^{-1}(U_i)\}_{i\in I}$ is isometric to a collection of subspaces of $f^{-1}(B(y_0 ,n))$.

Continuing, note that $f^{-1}(B(y_0 ,n))$ is strongly embeddable, then we have that\\
{\noindent}$\{f^{-1}(U_i)\}_{i\in I}$ is equi-strongly embeddable. Now the result immediately follows from Theorem \ref{prop21}.
\end{proof}
A metric space of finite asymptotic dimension has property A, also is strongly embeddable. Now we prove a natural generalization of this result, where uniform boundedness of the cover is replaced by the appropriate uniform version of strong embeddability.
\begin{theorem}
Let $X$ be a metric space. If for any $\sigma>0$ there exists a $(k,L)$-separated cover $\mathcal{U}$ of $X$ with $k^2 +1\leq L\sigma$ and $\mathcal{U}$ is equi-strongly embeddable, then $X$ is strongly embedable.
\end{theorem}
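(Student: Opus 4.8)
The plan is to reduce to Theorem \ref{thm21}: for every $R,\epsilon>0$ I will build a partition of unity on $X$ whose total variation is $\le\epsilon$ on $R$-close pairs and which is subordinate to an equi-strongly embeddable cover. The given $(k,L)$-separated cover $\mathcal{U}$ is equi-strongly embeddable, but separation alone gives it no Lebesgue number, so it cannot be handed directly to Lemma \ref{lem21}. The key move is to trade separation for a genuine Lebesgue number by passing to the controlled enlargement $\mathcal{U}_{L/2}$, to check that this enlargement preserves equi-strong embeddability, and then to use the hypothesis $k^2+1\le L\sigma$ to absorb the polynomial-in-$k$ constant coming out of Lemma \ref{lem21}.

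Concretely, fix $R,\epsilon>0$. Let $C_0$ be a universal constant with $(2k+4)(2k+5)\le C_0(k^2+1)$ for every integer $k\ge0$ --- for instance $C_0=21$ works, since $21(k^2+1)-(2k+4)(2k+5)=17k^2-18k+1=(17k-1)(k-1)\ge0$ --- and put $\sigma:=\epsilon/(2C_0R)$. Applying the hypothesis with this $\sigma$ gives a $(k,L)$-separated, equi-strongly embeddable cover $\mathcal{U}=\{U_i\}_{i\in I}$ of $X$ with $k^2+1\le L\sigma$. Since $\mathcal{U}$ is $(k,2L')$-separated with $L'=L/2$, it has $L'$-multiplicity $\le k+1$, so (by the facts recalled above) the enlarged cover $\mathcal{U}_{L'}=\{U_i(L')\}_{i\in I}$ has multiplicity $\le k+1$ and Lebesgue number $\ge L'$. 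Moreover each $U_i$ is an $L'$-net of $U_i(L')$, so since $\mathcal{U}$ is equi-strongly embeddable, Lemma \ref{Lemma23} shows that $\mathcal{U}_{L'}$ is equi-strongly embeddable.

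Next I would apply Lemma \ref{lem21} to $\mathcal{U}_{L'}$: as its bound is increasing in the multiplicity, it produces a subordinate partition of unity $\{\phi_i\}_{i\in I}$ with
\begin{align*}
\sum_{i\in I}|\phi_i(x)-\phi_i(y)| &\le\frac{(2k+4)(2k+5)}{L'}\,d(x,y)=\frac{2(2k+4)(2k+5)}{L}\,d(x,y)\\
&\le\frac{2C_0(k^2+1)}{L}\,d(x,y)\le 2C_0\sigma\,d(x,y),
\end{align*}
for all $x,y\in X$, the last step using $(k^2+1)/L\le\sigma$. Hence $d(x,y)\le R$ implies $\sum_{i\in I}|\phi_i(x)-\phi_i(y)|\le 2C_0\sigma R=\epsilon$. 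Thus $\{\phi_i\}_{i\in I}$ satisfies condition (1) of Theorem \ref{thm21} and is subordinate to the equi-strongly embeddable cover $\mathcal{U}_{L'}$, i.e.\ condition (2); as $R,\epsilon$ were arbitrary, Theorem \ref{thm21} yields that $X$ is strongly embeddable.

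I expect the only real content to be the estimate above: Lemma \ref{lem21} costs a factor quadratic in $k$, which is precisely why the hypothesis is phrased as $k^2+1\le L\sigma$ rather than with a linear bound in $k$. The rest should be routine --- converting separation into a Lebesgue number through the enlargement $\mathcal{U}\mapsto\mathcal{U}_{L'}$, and the fact that equi-strong embeddability passes from a uniformly coarsely dense subfamily to the ambient family (Lemma \ref{Lemma23}). A small point worth care is that the final partition of unity is subordinate to $\mathcal{U}_{L'}$, not to $\mathcal{U}$ itself, so one genuinely needs the enlarged family to be equi-strongly embeddable rather than merely quoting the hypothesis.
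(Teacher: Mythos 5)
Your proposal is correct and follows essentially the same route as the paper: enlarge the $(k,L)$-separated cover to gain a Lebesgue number while keeping multiplicity $\le k+1$, transfer equi-strong embeddability to the enlarged cover (you via Lemma \ref{Lemma23}, the paper via the same coarse-equivalence observation), and then feed the Lipschitz partition of unity from Lemma \ref{lem21} into Theorem \ref{thm21}, using $k^2+1\le L\sigma$ to absorb the quadratic constant. Your constant bookkeeping is in fact slightly more careful than the paper's, which writes $(2k+2)(2k+3)$ where multiplicity $k+1$ really gives $(2k+4)(2k+5)$; this only affects the choice of $\sigma$, as your $C_0=21$ computation shows.
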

\begin{proof}
Let $R,\epsilon>0$. Take a number $\sigma$, such that $0<\sigma<1/20R$. Then for any integer $k\geq 0$, we have $$k^2 +1\geq 2(2k+2)(2k+3)R\sigma.$$
It follows from the assumption that there exists a $(k,2L)$-separated cover $\mathcal{U}$ of $X$ such that $\mathcal{U}$ is equi-strongly embeddable and $k^2 +1\leq 2L\sigma\epsilon$. Note that the cover $\mathcal{U}_L$ has multiplicity $\leq k+1$ and Lebesgue number $L$. Moreover, $\mathcal{U}_L$ is equi-strongly embeddable, as it is coarsely equivalent to $\mathcal{U}_L$. By Lemma \ref{lem21}, there is a partition of unity $\{\phi_{\mathcal{U}(L)}\}_{\mathcal{U}(L)\in \mathcal{U}_L}$ subordinated to $\mathcal{U}_L$ such that for all $x,y\in X$
\begin{align*}
\sum\limits_{\mathcal{U}(L)\in \mathcal{U}_L}|\phi_{\mathcal{U}(L)}(x)-\phi_{\mathcal{U}(L)}(y)|\leq&\frac{(2k+2)(2k+3)}{L} d(x,y)\\
\leq&\frac{k^2 +1}{2RL\sigma}d(x,y)
\end{align*}
In particular, if $d(x,y)\leq R$, we have that $$\sum\limits_{\mathcal{U}(L)\in \mathcal{U}_L}|\phi_{\mathcal{U}(L)}(x)-\phi_{\mathcal{U}(L)}(y)|
\leq\frac{k^2 +1}{2L\sigma}\leq\epsilon.$$
This shows that $X$ satisfies the conditions in Theorem \ref{thm21}. Hence, $X$ is strongly embeddable.
\end{proof}

%%%%%%%%%%%%%%%%%%%%%%%%%%%%%%%%%%%%%%%%%%%%%%%%%%%%%%%%%%%%%%%%%%%%%%%%%%%%%%%%%%%%%%%%%
\section{Groups acting on metric spaces}
In this section, we are ready to complete the proof of our main result.

First, we recall the notion of $T$-quasi-stabilizer $(T>0)$. Let $G$ be a finitely generated group with a coarse quasi-action on a metric space $X$ and $x_0$ be a chosen base point in $X$. The $T$-quasi-stabilizer $W_{T}(x_0)$ is defined to be the subset of all elements $g$ in $G$ such that $d(gx_0 ,x_0)\leq T$. Moreover, we can view $G$ as a metric space with a word length metric (see \cite{Nowak21}).
\begin{theorem}
Assume that $G$ is a finitely generated group with a coarse quasi-action on a metric space $X$. If $X$ has finite asymptotic dimension and there exists a base point $x_0 \in X$ such that $W_T (x_0)$ is strongly embeddable for any $T>0$. Then $G$ is strongly embeddable.~\label{thm22}
\end{theorem}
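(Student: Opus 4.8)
The plan is to produce a partition of unity on $G$ satisfying the hypotheses of Theorem~\ref{thm21}, i.e.\ with small variation at scale $R$ and subordinated to an equi-strongly embeddable cover, and for this we pull back a cover of $X$ through the orbit map. Fix a base point $x_0\in X$ and define $\pi:G\to X$ by $\pi(g)=f_g(x_0)$. Using condition~(1) of the coarse quasi-action (a uniform bornologous function $\ell$) and condition~(3) ($d(f_g\circ f_h,f_{gh})\le B$), one checks that $\pi$ is bornologous: if $d_G(g,h)\le R$, writing $h=g s$ with $s$ of word length $\le R$, we get $d(\pi(g),\pi(h))=d(f_g(x_0),f_{gs}(x_0))\le d(f_g(x_0),f_g(f_s(x_0)))+B\le \ell(d(x_0,f_s(x_0)))+B$, and the finitely many generators give a uniform bound $S=S(R)$ on $d(x_0,f_s(x_0))$. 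So $\pi$ is uniformly expansive, exactly the sort of map to which the fibering philosophy of Section~4 applies.

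Given $R,\epsilon>0$, produce $S=S(R)$ as above. Since $X$ has finite asymptotic dimension, say $\operatorname{asdim}X=k$, choose a uniformly bounded cover $\mathcal{V}=\{V_i\}_{i\in I}$ of $X$ of multiplicity $\le k+1$ and Lebesgue number $\ge L$ with $L$ chosen large relative to $S$ and $\epsilon$ (concretely $L\ge (2k+2)(2k+3)S/\epsilon$). By Lemma~\ref{lem21} there is a partition of unity $\{\psi_i\}_{i\in I}$ on $X$ subordinated to $\mathcal{V}$ with $\sum_i|\psi_i(x)-\psi_i(y)|\le \frac{(2k+2)(2k+3)}{L}d(x,y)$, hence $\le\epsilon$ whenever $d(x,y)\le S$. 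Set $\varphi_i=\psi_i\circ\pi$ on $G$; this is a partition of unity on $G$, subordinated to the cover $\{\pi^{-1}(V_i)\}_{i\in I}$, and for $d_G(g,h)\le R$ we have $d(\pi(g),\pi(h))\le S$, so $\sum_i|\varphi_i(g)-\varphi_i(h)|=\sum_i|\psi_i(\pi(g))-\psi_i(\pi(h))|\le\epsilon$. Thus condition~(1) of Theorem~\ref{thm21} holds.

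It remains to verify that the cover $\{\pi^{-1}(V_i)\}_{i\in I}$ of $G$ is equi-strongly embeddable; this is the main obstacle. The point is that each $V_i$ is uniformly bounded, say $\operatorname{diam}V_i\le D$ for all $i$, so $\pi^{-1}(V_i)\subseteq\{g:d(f_g(x_0),x_0)\le D+d(x_0,x_0)\}$ — more precisely $\pi^{-1}(V_i)$ meets $W_{T}(x_0)$-type sets for $T$ depending only on $D$ and the constant $A$ from Definition~\ref{25}. I would show that each nonempty $\pi^{-1}(V_i)$ is, up to a uniformly bounded perturbation, a translate $g_i W_{T}(x_0)$ of a single quasi-stabilizer: pick $g_i\in\pi^{-1}(V_i)$; if $h\in\pi^{-1}(V_i)$ then $d(f_{g_i}(x_0),f_h(x_0))\le D$, and using conditions (1),(3) and the remark $d(f_g\circ f_{-g},\mathrm{id})\le A+B$ one gets $d(f_{g_i^{-1}h}(x_0),x_0)\le T$ for a uniform $T=T(D)$, i.e.\ $g_i^{-1}h\in W_T(x_0)$; conversely $g_i W_T(x_0)$ lands within a bounded neighbourhood of $V_i$, so $\pi^{-1}(V_i)$ sits inside a bounded enlargement $W_{T'}(x_0)$ after left translation. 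Since left translation is an isometry of $G$, and $W_{T'}(x_0)$ is strongly embeddable by hypothesis for the single value $T'=T'(D)$ (which is fixed once $R,\epsilon$ are fixed, because $D$ depends only on $\mathcal{V}$, which depends only on $L$, which depends only on $R$ and $\epsilon$), the family $\{\pi^{-1}(V_i)\}_{i\in I}$ consists of isometric copies of subspaces of one strongly embeddable space $W_{T'}(x_0)$. Hence it is equi-strongly embeddable (a single strongly embeddable space yields an equi-strongly embeddable family of its subspaces). Theorem~\ref{thm21} now applies and $G$ is strongly embeddable. The delicate point throughout is bookkeeping the constants $A,B,\ell$ so that $T'$ depends only on $R,\epsilon$ and not on the index $i$; I expect that to be the crux of the argument.
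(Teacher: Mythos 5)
Your proposal is correct and follows essentially the same route as the paper: pull back a cover of $X$ coming from finite asymptotic dimension through the orbit map, left-translate each preimage by $g_i^{-1}$ into a single quasi-stabilizer $W_{A+2B+\ell(D)}(x_0)$ (using the constants $A,B,\ell$ of the coarse quasi-action) to get equi-strong embeddability, pull back the partition of unity from Lemma~\ref{lem21}, and apply Theorem~\ref{thm21}. The only difference is cosmetic: the paper works with an enlarged cover $\mathcal{V}$ and weights the pulled-back partition of unity by $\sum_{h}|\beta^i_g(h)|^2$ (which equals $1$), whereas your direct pullback already satisfies the hypotheses of Theorem~\ref{thm21}.
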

\begin{proof}
Since the orbit $Gx_0$ is a subset of $X$, $Gx_0$ has finite asymptotic dimension. Without loss of generality, we can assume that the action of $G$ on $X$ is transitive. Let $S$ be the finite symmetric generating set in the definition of the word length metric for $G$ and take
$$\lambda =\max\{d(sx_0 ,x_0)\mid s\in S\}.$$
Then there exists a map $\pi :G\rightarrow X$ given by $\pi (g)=gx_0$ for all $g\in G$. If the action of $G$ on $X$ is by isometries, $\pi$ is $\lambda$-Lipschitz. In our case, we show that $\pi$ is $\ell(\lambda)$-Lipschitz. Indeed, $d(\pi(g),\pi(gs))=d(gx_0 ,gsx_0)\leq\ell(d(x_0 ,sx_0))\leq\ell(\lambda)$ for any $g\in G$ and $s\in S$.

Suppose $X$ has asymptotic dimension $\leq k$. Let $L>0$ be given. By definition \ref{def23}, there exists a uniformly bounded cover $\mathcal{U}=\{U_i\}_{i\in I}$ of $X$ with Lebesgue number $L$ and multiplicity $k+1$ such that the $L$-neighbourhood $\mathcal{V}=\{V_i\}_{i\in I}$
of $\mathcal{U}$ is also a cover of multiplicity $k+1$.

Since $\mathcal{V}$ is uniformly bounded, there exist $T>0$ and $x_i\in X$ such that $V_i\subseteq B(x_i ,T)$ for each $i\in I$. On the other hand, we can take $g_i\in G$ such that $x_i =g_i x_0$.

By Definition \ref{25}, we have that $d(g_{i}^{-1}x_i ,x_0)\leq A+B$, then
$$g_{i}^{-1}(B(x_i ,T))\subseteq B(g_{i}^{-1}x_i ,\ell(T))\subseteq B(x_0 ,A+B+\ell(T)).$$
It follows from the definition of $\pi$,
\begin{align*}
g_{i}^{-1}\pi^{-1}(B(x_i ,T))\subseteq&\pi^{-1}(B(x_0 ,A+2B+\ell(T)))\\
=&W_{A+2B+\ell(T)}(x_0)
\end{align*}
Then we get $$g_{i}^{-1}(V_i)\subseteq W_{A+2B+\ell(T)}(x_0).$$
Note that $\{\pi^{-1}(V_i)\}_{i\in I}$ is isometric to a family of subspace of $W_{A+2B+\ell(T)}(x_0)$. Since $W_{A+2B+\ell(T)}(x_0)$ is strongly embeddable, we have that $\{\pi^{-1}(V_i)\}_{i\in I}$ is equi-strongly embeddable. By the same argument, $\{\pi^{-1}(U_i)\}_{i\in I}$ is also equi-strongly embeddable and covers $G$.

$\vspace{2mm}$We will use Theorem \ref{thm21} to complete the proof. Let $R>0$ and $\epsilon>0$. Take $L\geq\vspace{2mm}\frac{2\ell(\lambda)R(2k+2)(2k+3)}{\epsilon}$. Since $\mathcal{U}$ is a uniformly bounded cover of $X$ with Lebesgue number $L$ and multiplicity $k+1$, it follows from Lemma \ref{lem21} that there exists a partition of unity $\{\phi_{U_i}\}_{u_i \in\mathcal{U}}$ subordinated to the cover satisfying $$\sum\limits_{U_i \in \mathcal{U}}|\phi_{U_i}(x)-\phi_{U_i}(y)|\leq\frac{(2k+2)(2k+3)}{L} d(x,y)$$ for any $x,y\in X$.

Moreover, note that $\{\pi^{-1}(V_i)\}_{i\in I}$ is equi-strongly embeddable, where there exists a collection of maps $$\beta^{i}:\pi^{-1}(V_i)\rightarrow(\ell^{2}(\pi^{-1}(V_i)))_1$$ such that $\beta^{i}$ has $(R,\frac{\epsilon}{4})$ variation for every $i\in I$.

Define for each $i\in I$ a map $\varphi_i :G\rightarrow[0,1]$ by setting $$\varphi_i (g)=\sum\limits_{h\in\pi^{-1}(V_i)}\phi_{U_i}(\pi(g))|\beta_{g}^{i}(h)|^2$$
for $g\in G$.
We claim that $\{\varphi_i\}_{i\in I}$ is partition of unity on $G$ satisfying the conditions of Theorem \ref{thm21}.
First, for any $g\in G$,
\begin{align*}
\sum\limits_{i\in I}\varphi_i (g)=&\sum\limits_{i\in I}\sum\limits_{h\in\pi^{-1}(V_i)}\phi_{U_i}(\pi(g))|\beta_{g}^{i}(h)|^2\\
=&\sum\limits_{i\in I}\phi_{U_i}(\pi(g))\sum\limits_{h\in\pi^{-1}(V_i)}|\beta_{g}^{i}(h)|^2\\
=&\sum\limits_{i\in I}\phi_{U_i}(\pi(g))\\
=&1.
\end{align*}
Note also that each $\varphi_i$ vanishes outside $\pi^{-1}(U_i)$, so $\{\varphi_i\}_{i\in I}$ is a partition of unity on $G$ and subordinates to $\{\pi^{-1}(U_i)\}_{i\in I}$. Second, for any $g,g'\in G$ with $d(g,g')\leq R$. If $g\in \pi^{-1}(U_i)\}_{i\in I}$, then $\pi(g')$ belongs to the $\ell(\lambda)R$-neighbourhood of $U_i$. The latter space is a subspace of $V_i$ as $L$ is large enough. Then $g'\in\pi^{-1}(V_i)$. We have the following estimates
\begin{align*}
&\sum\limits_{i\in I}|\varphi_{i}(g)-\varphi_{i}(g')|\\
=&\sum\limits_{i\in I}|\sum\limits_{h\in\pi^{-1}(V_i)}\phi_{U_i}(\pi(g))|\beta_{g}^{i}(h)|^2 -\sum\limits_{h\in\pi^{-1}(V_i)}\phi_{U_i}(\pi(g'))|\beta_{g'}^{i}(h)|^2~|\\
=&\sum\limits_{i\in I}|\sum\limits_{h\in\pi^{-1}(V_i)}(\phi_{U_i}(\pi(g))|\beta_{g}^{i}(h)|^2 -\phi_{U_i}(\pi(g'))|\beta_{g'}^{i}(h)|^2)~|\\
\leq&\sum\limits_{i\in I}\sum\limits_{h\in\pi^{-1}(V_i)}|~\phi_{U_i}(\pi(g))|\beta_{g}^{i}(h)|^2 -\phi_{U_i}(\pi(g'))|\beta_{g'}^{i}(h)|^2~|\\
\leq&\sum\limits_{i\in I}\sum\limits_{h\in\pi^{-1}(V_i)}|~\phi_{U_i}(\pi(g))|\beta_{g}^{i}(h)|^2 -\phi_{U_i}(\pi(g))|\beta_{g'}^{i}(h)|^2\\&+\phi_{U_i}(\pi(g))|\beta_{g'}^{i}(h)|^2 -\phi_{U_i}(\pi(g'))|\beta_{g'}^{i}(h)|^2~|\\
\leq&\sum\limits_{i\in I}\sum\limits_{h\in\pi^{-1}(V_i)}\phi_{U_i}(\pi(g))|~|\beta_{g}^{i}(h)|^2 -|\beta_{g'}^{i}(h)|^2~|\\&+\sum\limits_{i\in I}|\phi_{U_i}(\pi(g))-\phi_{U_i}(\pi(g'))|\\
\leq&\sum\limits_{i\in I}\phi_{U_i}(\pi(g))\sum\limits_{h\in\pi^{-1}(V_i)}|\beta_{g}^{i}(h)+\beta_{g'}^{i}(h)|\cdot|\beta_{g}^{i}(h)-\beta_{g'}^{i}(h)|\\
&+\frac{(2k+2)(2k+3)}{L} d(\pi(g),\pi(g'))\\
\leq&\sum\limits_{i\in I}\phi_{U_i}(\pi(g))[(\sum\limits_{h\in\pi^{-1}(V_i)}|\beta_{g}^{i}(h)+\beta_{g'}^{i}(h)|^2 )^{\frac{1}{2}}\cdot(\sum\limits_{h\in\pi^{-1}(V_i)}
|\beta_{g}^{i}(h)-\beta_{g'}^{i}(h)|^2 )^{\frac{1}{2}}]\\
&+\frac{(2k+2)(2k+3)}{L}\ell(\lambda)R\\
\leq&\sum\limits_{i\in I}\phi_{U_i}(\pi(g))\parallel\beta_{g}^{i}+\beta_{g'}^{i}\parallel\cdot\parallel\beta_{g}^{i}-\beta_{g'}^{i}\parallel+\frac{\epsilon}{2}\\
\leq&\sum\limits_{i\in I}\phi_{U_i}(\pi(g))\cdot 2\cdot\frac{\epsilon}{4}+\frac{\epsilon}{2}\\
=&\epsilon.
\end{align*}
Thus $\{\varphi_i\}_{i\in I}$ has the properties in Theorem \ref{thm21}, whence $G$ is strongly embeddable.
\end{proof}
Since Theorem \ref{thm21} also holds for coarse embeddablity and exactness (it is equivalent to property A for the metric spaces with bounded geometry) \cite{Guentner21}, a similar proof shows that the same hypotheses, with 'coarse embeddability' (resp. exactness) replacing 'strong embeddability', imply that $G$ is coarsely embeddable (resp. exact), as described next.
\begin{theorem}[see \cite{Guentner21}]
Say $X$ is a metric space such that for any $R,\,\epsilon>0$ there exists a partition of unity $\{\phi_i\}_{i\in I}$ on $X$ satisfying:

\vspace{2mm}(1) for all $x,y\in X$, if $d(x,y)\leq R$, the $\sum\limits_{i\in I}|\phi_{i}(x)-\phi_{i}(y)|\leq\epsilon$;

\vspace{1mm}(2) $\{\phi_i\}_{i\in I}$ is subordinated to an equi-coarsely embeddable (resp. equi-exact) cover

\vspace{2mm}~~~~~$\mathcal{U}=\{U_i\}_{i\in I}$ of $X$.

\vspace{2mm}Then $X$ is coarsely embeddable (resp. exact).
\end{theorem}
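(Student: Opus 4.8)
The plan is to run, almost word for word, the partition-of-unity argument that proves Theorem \ref{thm21} (from \cite{Xia21}), simply replacing the Hilbert-space maps witnessing strong embeddability by maps witnessing coarse embeddability, respectively property A --- which, for the bounded geometry spaces considered here, coincides with exactness \cite{Guentner21}. Two standard ingredients are needed. First, the usual reformulations: a space $Z$ coarsely embeds into Hilbert space iff for all $R,\delta>0$ there are $S>0$ and a map $\xi\colon Z\to(\ell^2)_1$ of $(R,\delta)$ variation with $\langle\xi_z,\xi_{z'}\rangle\le\delta$ whenever $d(z,z')\ge S$; and $Z$ has property A iff for all $R,\delta>0$ there are $S>0$ and a map $\xi\colon Z\to(\ell^1(Z))_1$ of $(R,\delta)$ variation with $\mathrm{supp}\,\xi_z\subseteq B(z,S)$; correspondingly, a family $(Z_i)_{i\in I}$ is equi-coarsely embeddable (resp.\ equi-exact) iff these maps can be chosen with $S$ uniform in $i$. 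Second, the neighbourhood permanence of these equi-properties --- the exact analogues of Lemmas \ref{Lemma22} and \ref{Lemma23}: if $(Z_i)$ is equi-coarsely embeddable (resp.\ equi-exact) then so is the family $\{Z_i(c)\}$ of $c$-neighbourhoods in the ambient spaces, for any fixed $c$, because each $Z_i$ is a $c$-net of $Z_i(c)$; the proofs are the same as, or simpler than, those of Lemmas \ref{Lemma22}--\ref{Lemma23}.

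Fix $R,\epsilon>0$. Apply the hypothesis with a sufficiently fine parameter $(R,\delta_0)$ to get a partition of unity $\{\phi_i\}_{i\in I}$ on $X$ with $\sum_{i\in I}|\phi_i(x)-\phi_i(y)|\le\delta_0$ whenever $d(x,y)\le R$, subordinated to an equi-coarsely embeddable (resp.\ equi-exact) cover $\mathcal{U}=\{U_i\}_{i\in I}$. Replacing $\mathcal{U}$ by the enlarged cover $\mathcal{U}_R=\{U_i(R)\}_{i\in I}$, which is still equi-coarsely embeddable (resp.\ equi-exact) by the second ingredient, I may assume the $i$-th witnessing map is defined on all of $U_i(R)$; in particular it is defined at both $x$ and $y$ whenever $\phi_i(x)\neq0$ and $d(x,y)\le R$. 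Then, applying equi-coarse embeddability (resp.\ equi-exactness) of $\mathcal{U}_R$ at a sufficiently fine scale $(R,\delta)$, obtain one $S>0$ and maps $\xi^i\colon U_i(R)\to(\ell^2)_1$ (resp.\ $\xi^i\colon U_i(R)\to(\ell^1(U_i(R)))_1$) of $(R,\delta)$ variation satisfying the spreading (resp.\ support-in-$B(\cdot,S)$) condition.

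Now glue. For coarse embeddability set $\Xi_x=\bigoplus_{i\in I}\sqrt{\phi_i(x)}\,\xi^i_x$, a unit vector of the Hilbert space $\bigoplus_{i\in I}\ell^2$ since $\|\Xi_x\|^2=\sum_i\phi_i(x)=1$. For property A set $g_x=\sum_{i\in I}\phi_i(x)\,\xi^i_x\in\ell^1(X)$ (via $U_i(R)\subseteq X$), a unit vector of $\ell^1(X)$ with $\mathrm{supp}\,g_x\subseteq\bigcup_{i:\,\phi_i(x)\neq0}B(x,S)=B(x,S)$. When $d(x,y)\le R$, write each summand as $\sqrt{\phi_i(x)}(\xi^i_x-\xi^i_y)+(\sqrt{\phi_i(x)}-\sqrt{\phi_i(y)})\xi^i_y$ (resp.\ $\phi_i(x)(\xi^i_x-\xi^i_y)+(\phi_i(x)-\phi_i(y))\xi^i_y$), and combine the $(R,\delta)$ variation of the $\xi^i$, the inequality $|\sqrt a-\sqrt b|\le\sqrt{|a-b|}$, the bound $\sum_i|\phi_i(x)-\phi_i(y)|\le\delta_0$, and Cauchy--Schwarz to get $\|\Xi_x-\Xi_y\|\le\epsilon$ (resp.\ $\|g_x-g_y\|_1\le\epsilon$) provided $\delta,\delta_0$ were chosen small enough; this estimate is formally the same as the variation computation in the proof of Theorem \ref{thm22}. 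Finally, for coarse embeddability, if $d(x,y)\ge S$ then every $i$ with $\phi_i(x)\phi_i(y)\neq0$ has $x,y\in U_i(R)$, so $\langle\xi^i_x,\xi^i_y\rangle\le\delta$, whence $\langle\Xi_x,\Xi_y\rangle=\sum_i\sqrt{\phi_i(x)\phi_i(y)}\,\langle\xi^i_x,\xi^i_y\rangle\le\delta\sum_i\sqrt{\phi_i(x)\phi_i(y)}\le\delta$ by Cauchy--Schwarz. Since $R,\epsilon$ were arbitrary, the first ingredient gives that $X$ is coarsely embeddable, resp.\ (from the support bound on $g_x$) that $X$ has property A, i.e.\ is exact.

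The only genuinely delicate point --- everything else being bookkeeping parallel to the proofs of Theorems \ref{thm21} and \ref{thm22} --- is synchronising the two appeals to the equi-property: the fibrewise maps must be defined on a common $R$-neighbourhood of the supports of the $\phi_i$ \emph{and} share one spreading/support scale $S$, and then the fibrewise data must be reassembled into a single map on $X$. The enlargement step together with the coarse/exact versions of Lemmas \ref{Lemma22}--\ref{Lemma23} handles the first issue; for coarse embeddability the reassembly needs no descent because a coarse embedding of $X$ into the abstract Hilbert space $\bigoplus_{i}\ell^2$ is already a coarse embedding into Hilbert space, and for exactness the $\ell^1$ convex-combination gluing lands in $\ell^1(X)$ with support automatically inside $B(\cdot,S)$.
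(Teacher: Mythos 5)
Your argument is correct and is essentially the standard Dadarlat--Guentner gluing proof that the paper implicitly relies on: the paper states this theorem without proof, citing \cite{Guentner21}, and your reconstruction (enlarging the cover so the fibrewise maps are defined at both points of an $R$-close pair, square-root weights plus Cauchy--Schwarz for the Hilbert-space case, convex combinations in $\ell^1$ with support control for exactness) is that argument. The only cosmetic points are that the $\ell^1$-valued witnesses should be taken non-negative (probability-valued) so that $\|g_x\|_1=1$ rather than merely $\le 1$, and the spreading condition is usually stated with $|\langle\xi_z,\xi_{z'}\rangle|\le\delta$; neither affects the proof.
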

\begin{theorem}
Assume that $G$ is a finitely generated group with a coarse quasi-action on a metric space $X$. If $X$ has finite asymptotic dimension and there exists a base point $x_0 \in X$ such that $W_T (x_0)$ is coarsely embeddable (resp. exact) for any $T>0$. Then $G$ is coarsely embeddable (resp. exact).
\end{theorem}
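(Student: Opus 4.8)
The plan is to repeat the proof of Theorem \ref{thm22} essentially line by line, with ``strongly embeddable'' replaced everywhere by ``coarsely embeddable'' (resp. ``exact''), and with the partition-of-unity criterion stated just above used in place of Theorem \ref{thm21}. As there, one first reduces to the case in which $G$ acts transitively on $X$ (the orbit $Gx_0\subseteq X$ still has finite asymptotic dimension), introduces the map $\pi\colon G\to X$, $\pi(g)=gx_0$, and checks that it is $\ell(\lambda)$-Lipschitz with $\lambda=\max\{d(sx_0,x_0):s\in S\}$; none of this depends on which embeddability notion is in play.

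Next, fix $k\ge 0$ such that $X$ has asymptotic dimension $\le k$ and, given $L>0$, choose a uniformly bounded cover $\mathcal{U}=\{U_i\}_{i\in I}$ of $X$ of Lebesgue number $L$ and multiplicity $k+1$ whose $L$-neighbourhood $\mathcal{V}=\{V_i\}_{i\in I}$ again has multiplicity $k+1$. Since $\mathcal{V}$ is uniformly bounded, pick $T>0$ and points $x_i=g_ix_0$ with $V_i\subseteq B(x_i,T)$; then, invoking Definition \ref{25} exactly as in the proof of Theorem \ref{thm22}, both $\{\pi^{-1}(V_i)\}_{i\in I}$ and $\{\pi^{-1}(U_i)\}_{i\in I}$ are isometric to families of subspaces of the single space $W_{A+2B+\ell(T)}(x_0)$, which is coarsely embeddable (resp. exact) by hypothesis. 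Hence, using the coarse-embeddability (resp. exactness) analogues of Lemma \ref{Lemma22} and Lemma \ref{Lemma23} — that a family of subspaces, and a family of $c$-nets, of an equi-coarsely-embeddable (resp. equi-exact) family is again of the same type — the families $\{\pi^{-1}(V_i)\}_{i\in I}$ and $\{\pi^{-1}(U_i)\}_{i\in I}$ are equi-coarsely-embeddable (resp. equi-exact), and $\{\pi^{-1}(U_i)\}_{i\in I}$ covers $G$.

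Finally, reproduce the partition-of-unity construction on $G$. Given $R,\epsilon>0$, take $L\ge\tfrac{2\ell(\lambda)R(2k+2)(2k+3)}{\epsilon}$, apply Lemma \ref{lem21} to $\mathcal{U}$ to get $\{\phi_{U_i}\}_{i\in I}$ with $\sum_i|\phi_{U_i}(x)-\phi_{U_i}(y)|\le\tfrac{(2k+2)(2k+3)}{L}d(x,y)$, choose maps $\beta^i\colon\pi^{-1}(V_i)\to(\mathcal{H}^i)_1$ of $(R,\tfrac{\epsilon}{4})$ variation witnessing equi-coarse-embeddability (resp. equi-exactness) of $\{\pi^{-1}(V_i)\}_{i\in I}$, and set $\varphi_i(g)=\sum_{h\in\pi^{-1}(V_i)}\phi_{U_i}(\pi(g))\,|\beta^i_g(h)|^2$. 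The verifications that $\sum_i\varphi_i\equiv 1$, that $\{\varphi_i\}_{i\in I}$ is subordinate to the equi-coarsely-embeddable (resp. equi-exact) cover $\{\pi^{-1}(U_i)\}_{i\in I}$, and that $\sum_i|\varphi_i(g)-\varphi_i(g')|\le\epsilon$ whenever $d(g,g')\le R$ are word for word the computation in the proof of Theorem \ref{thm22}: they use only that each $\beta^i_g$ is a unit vector in a Hilbert space, that the $\beta^i$ have $(R,\tfrac{\epsilon}{4})$ variation, and the Cauchy--Schwarz step bounding $\sum_h\big||\beta^i_g(h)|^2-|\beta^i_{g'}(h)|^2\big|$ by $\|\beta^i_g+\beta^i_{g'}\|\cdot\|\beta^i_g-\beta^i_{g'}\|\le 2\cdot\tfrac{\epsilon}{4}$. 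Applying the preceding partition-of-unity criterion then shows that $G$ is coarsely embeddable (resp. exact).

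The only step that needs genuine attention is the permanence toolkit for the two weaker notions. For coarse embeddability, the subspace statement (analogue of Lemma \ref{Lemma22}) is actually simpler than in the strong case: one just restricts the witnessing maps, since they need not take values in $\ell^2$ of the underlying space, so the auxiliary maps $p_i,\alpha^i,\eta^i$ of Lemma \ref{Lemma22} are not needed. For exactness the same facts are standard, either directly from the description of exactness by partitions of unity or via its equivalence with property A for spaces of bounded geometry; the $c$-net statement is then formal as in Lemma \ref{Lemma23}. Everything particular to the coarse quasi-action — the constants $A,B$ and the function $\ell$ — enters only through the bound $T\mapsto A+2B+\ell(T)$ on the size of the relevant quasi-stabilizer, exactly as in Theorem \ref{thm22}, so no new difficulty arises on that front.
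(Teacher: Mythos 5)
Your proposal is correct and is essentially the paper's own argument: the paper merely remarks that the proof of Theorem \ref{thm22} carries over once Theorem \ref{thm21} is replaced by its coarse-embeddability/exactness analogue (the Dadarlat--Guentner partition-of-unity criterion stated after Theorem \ref{thm22}), which is exactly the substitution you carry out, together with the standard subspace and $c$-net permanence facts for those notions. The only cosmetic point is that when the witnessing vectors $\beta^i_g$ lie in an abstract Hilbert space the expression $|\beta^i_g(h)|^2$ should be interpreted via coordinates in an orthonormal basis (or one simply observes that $\varphi_i=\phi_{U_i}\circ\pi$, since $\|\beta^i_g\|=1$), which changes nothing in the estimates.
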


%%%%%%%%%%%%%%%%%%%%%%%%%%%%%%%%%%%%%%%%%%%%%%%%%%%%%%%%%%%%%%%%%%%%%%%%%%%%%%%%%%%%

\vskip 1cm

\noindent \noindent Guoqiang Li\\
College of Mathematics and Statistics,\\
Chongqing University (at Huxi Campus),\\
Chongqing 401331, P. R. China\\
E-mail: \url{guoqiangli@cqu.edu.cn}\\

\noindent \noindent Xianjin Wang\\
College of Mathematics and Statistics,\\
Chongqing University (at Huxi Campus),\\
Chongqing 401331, P. R. China\\
E-mail: \url{xianjinwang@cqu.edu.cn}\\
\end{document}